\newcommand*{\rom}[1]{\expandafter\@slowromancap\romannumeral #1@}
\def \fc {f_{\cN\text{c}}}
\def \fsc {f_{\cN\text{sc}}}
\def \fnc {f_{\cC}}
\newcommand{\la}{\langle}
\newcommand{\ra}{\rangle}
\def \dist {\text{dist}}
\def \lg {c_{\gamma}}
\begin{document}

\title{\huge Lower Bounds for Smooth Nonconvex Finite-Sum Optimization}
\author
{
	Dongruo Zhou\thanks{Department of Computer Science, University of California, Los Angeles, CA 90095, USA; e-mail: {\tt drzhou@cs.ucla.edu}} 
	~~~and~~~
	Quanquan Gu\thanks{Department of Computer Science, University of California, Los Angeles, CA 90095, USA; e-mail: {\tt qgu@cs.ucla.edu}}
}
\date{January 23, 2019}
\maketitle

\begin{abstract}
Smooth finite-sum optimization has been widely studied in both convex and nonconvex settings. However, existing lower bounds for finite-sum optimization are mostly limited to the setting where each component function is (strongly) convex, while the lower bounds for nonconvex finite-sum optimization remain largely unsolved. In this paper, we study the lower bounds for smooth nonconvex finite-sum optimization, where the objective function is the average of $n$ nonconvex component functions. We prove tight lower bounds for the complexity of finding $\epsilon$-suboptimal point and $\epsilon$-approximate stationary point in different settings, for a wide regime of the smallest eigenvalue of the Hessian of the objective function (or each component function). Given our lower bounds, we can show that existing algorithms including {KatyushaX} \citep{allen2018katyushax}, {Natasha} \citep{allen2017natasha}, RapGrad \citep{lan2018accelerated} and {StagewiseKatyusha} \citep{yang2018does} have achieved optimal {Incremental First-order Oracle} (IFO) complexity (i.e., number of IFO calls) up to logarithm factors for nonconvex finite-sum optimization. We also point out potential ways to further improve these complexity results, in terms of making stronger assumptions or by a different convergence analysis. 
\end{abstract}


\section{Introduction}

We consider minimizing the following unconstrained finite-sum optimization problem: 
\begin{align}
   \min_{\xb \in \RR^d} F(\xb) = \frac{1}{n}\sum_{i=1}^nf_i(\xb),\label{eq:intro_problem_we_study}
\end{align}
where each $f_i(\xb):\RR^m\rightarrow \RR$ is smooth and \emph{nonconvex} function. We are interested in the algorithmic performance of \emph{first-order algorithms} for solving \eqref{eq:intro_problem_we_study}, which have accesses to the Incremental First-order Oracle (IFO) \citep{agarwal2015lower} defined as follows:
\begin{align}
    \text{Given $\xb$ and $i\in [n]$, an IFO returns }[f_i(\xb), \nabla f_i(\xb)].\notag
\end{align}

In this paper, we consider the very general setting where $F(\xb)$ is of $(l,L)$-smoothness \citep{allen2017natasha}, 
i.e., there exist some constant $l \in \RR$ and $L>0$, such that for any $\xb, \yb \in \RR^d$,
\begin{align}\label{eq:intro_lower}
     \frac{l}{2}\|\xb - \yb\|_2^2 &\leq  F(\xb) - F(\yb) - \la \nabla F(\yb), \xb - \yb\ra\notag \\
      &\leq \frac{L}{2}\|\xb - \yb\|_2^2,
\end{align}
where $l\in \RR$ \footnote{We allow $l$ to be nonnegative, which covers the definitions of convex and strongly convex functions.} is the lower smoothness parameter, and $L>0$ is the upper smoothness parameter.
Note that conventional $L$-smoothness definition is a special case of \eqref{eq:intro_lower}, where $l = -L$.
\eqref{eq:intro_lower} is quite general, because with different choice of $l$, \eqref{eq:intro_problem_we_study} and \eqref{eq:intro_lower} together can cover various kinds of smooth finite-sum optimization problems. 
For example, when $l \geq 0$, $F(\xb)$ is convex function, and $F(\xb)$ is $\sigma$-strongly convex if $l = \sigma >0$. Such a sum-of-nonconvex optimization problem (convex functions that are average of nonconvex ones) was originally identified in \citet{shalev2015sdca}, and widely used in various machine learning problems such as principal component analysis (PCA) \citep{garber2016faster,allen2016lazysvd}. With $l\geq 0$, our goal is to find an $\epsilon$-suboptimal solution $\hat\xb$ \citep{woodworth2016tight} to \eqref{eq:intro_problem_we_study}, which satisfies
\begin{align}
    F(\hat\xb) - \inf_\xb F(\xb) \leq \epsilon.\label{suboptimal}
\end{align}
On the other hand, when $l=-\sigma < 0$, $F(\xb)$ is nonconvex, and it is called $\sigma$-almost convex \citep{carmon2018accelerated} \footnote{It is also known as $\sigma$-weakly convex \citep{yang2018does} or $\sigma$-bounded nonconvex \citet{allen2017natasha}.)}. 
It is known that finding an $\epsilon$-suboptimal solution in such nonconvex setting is NP-hard \citep{murty1987some}. Thus, our goal is instead to find an $\epsilon$-approximate stationary point $\hat\xb$ of $F(\xb)$ for general nonconvex case, which is defined as follows
\begin{align}
    \|\nabla F(\hat\xb)\|_2 \leq \epsilon.\label{station}
\end{align}

There is a vast literature on finding either \eqref{suboptimal} or \eqref{station} for \eqref{eq:intro_problem_we_study}, such as SDCA without Duality \citep{shalev2016sdca}, Natasha \citep{allen2017natasha}, KatyushaX \citep{allen2018katyushax}, RapGrad \citep{lan2018accelerated}, StagewiseKatyusha \citep{yang2018does}, RepeatSVRG \citep{agarwal2017finding, carmon2018accelerated}, to mention a few.
In specific, this line of work can be divided into two categories based on the smoothness assumption over $\{f_i(\xb)\}_{i=1}^n$. The first category of work \citep{shalev2016sdca, allen2017natasha, allen2018katyushax, agarwal2017finding, carmon2018accelerated} makes the assumption that each individual component function $f_i(\xb)$ is $L$-smooth and $F(\xb)$ is $(l,L)$ smooth. Under such an assumption, when $F(\xb)$ is convex or $\sigma$-strongly convex, SDCA without Duality and KatyushaX can find the $\epsilon$-suboptimal solution within $O(n+n^{3/4}\sqrt{L/\epsilon})$ or $O(n+n^{3/4}\sqrt{L/\sigma}\log(1/\epsilon))$ IFO calls respectively. When $F(\xb)$ is $\sigma$-almost convex, {Natasha} and RepeatSVRG can find the $\epsilon$-approximate stationary point with $O((n^{3/4}\sqrt{\sigma L} \land \sqrt{n}L)/\epsilon^2)$ IFO calls. 

The second category of work \citet{allen2017natasha, allen2018katyushax,lan2018accelerated,yang2018does} assumes that each $f_i(\xb)$ is $(-\sigma, L)$-smooth \footnote{In fact, \citet{allen2017natasha, allen2018katyushax} fall into both categories.}. With such an assumption, {RapGrad} and {StagewiseKatyusha} find $\epsilon$-approximate stationary point with $O((n\sigma + \sqrt{n\sigma L})/\epsilon^2)$ IFO calls. 

Given the above IFO complexity results, a natural research question is: 

\emph{Are these upper bounds of IFO complexity already optimal?}


We answer this question in an affirmative way by proving lower bounds on the IFO complexity for a wide regime of $l$, using carefully constructed functions.  
 More specifically, our contributions are summarized as follows:
\begin{enumerate}
    \item For the case that $F(\xb)$ is convex or $\sigma$-strongly convex (a.k.a., sum-of-nonconvex optimization), we show that without the convexity assumption on each component function $f_i(\xb)$, the lower bound of IFO complexity for any linear-span first-order randomized algorithms (See Definition \ref{eq:linear span algorithm}) to find $\epsilon$-suboptimal solution is $\Omega(n+n^{3/4}\sqrt{L/\sigma}\log(1/\epsilon))$ when $F$ is $\sigma$-strongly convex, and $\Omega(n+n^{3/4}\sqrt{L/\epsilon})$ when $F(\xb)$ is convex, where $L$ is the average smoothness parameter on $\{f_i(\xb)\}_{i=1}^n$ (See Definition \ref{def:average smooth}). That is in contrast to the lower bounds $\Omega(n+n^{1/2}\sqrt{L/\sigma}\log(1/\epsilon))$ and $\Omega(n+n^{1/2}\sqrt{L/\epsilon})$ proved by \citet{woodworth2016tight} when each component function $f_i(\xb)$ is convex. 
    \item For the case that $F(\xb)$ is $\sigma$-almost convex, we show that the lower bound of IFO complexity for any linear-span first-order randomized algorithms to find $\epsilon$-approximate stationary point is $\Omega(1/\epsilon^2(n^{3/4}\sqrt{L\sigma} \land \sqrt{n}L))$ when $\{f_i(\xb)\}_{i=1}^n$ is $L$-average smooth, and $\Omega(1/\epsilon^2(\sqrt{n L\sigma} \land L))$ when each $f_i(\xb)$ is $(-\sigma, L)$-smooth. To our best knowledge, this is the first lower bound result which precisely characterizes the dependency on the lower smoothness parameter for finding approximate stationary point.
    \item We show that many existing algorithms including SDCA without Duality \citep{shalev2016sdca}, Natasha \citep{allen2017natasha}, KatyushaX \citep{allen2018katyushax}, RapGrad \citep{lan2018accelerated}, StagewiseKatyusha \citep{yang2018does} and RepeatSVRG \citep{agarwal2017finding, carmon2018accelerated} have indeed achieved optimal IFO complexity for a large regime of the lower smoothness parameter, with slight modification of their original convergence analyses. 
\end{enumerate}

\noindent\textbf{Notation}
We define $\lg = 360$. We use $a(x) = O(b(x))$ if $a(x) \leq Cb(x)$, where $C$ is a universal constant. We use $\tilde O(\cdot)$ to hide polynomial logarithm terms. For any vector $\vb \in \RR^m$, we use $\vb_i$ to denote the $i$-th coordinate of $\vb$, and $\|\vb\|_2$ to denote its 2-norm. For any vector sequence $\{\vb^{(i)}\}_{i=1}^n$, we use $\vb^{(i)}$ to denote the $i$-th vector. We say a matrix sequence $\{\Ub^{(i)}\}_{i=1}^n \in \cO(a,b,n)$ where for each $i$, $\Ub^{(i)} \in \RR^{a\times b}$, if $\Ub^{(i)}(\Ub^{(i)})^\top = \Ib$ and $\Ub^{(i)}(\Ub^{(j)})^\top = \textbf{0}$ for any $1 \leq i \neq j \leq n$. For any sets $A,B\subseteq \RR^d$, we define the distance between them as $\dist(A,B) = \inf_{\ab \in A, \bbb \in B}\|\ab - \bbb\|_2$. For any $A \subseteq \RR^d$, we denote by $\text{Lin}\{A\}$ the linear space spanned by $a \in A$. In the rest of this paper, we use $F(\xb), f_i(\xb)$ and $F, f_i$ interchangeably when there is no confusion.


\section{Additional Related Work }\label{related_work}

In this section, we review additional related work that is not discussed in the introduction section.

\noindent\textbf{Existing lower bounds for nonconvex optimization:} To the best of our knowledge, the only existing lower bounds for nonconvex optimization are proved in \citet{carmon2017lower, carmon2017lower2, fang2018spider}. \citet{carmon2017lower, carmon2017lower2} proved the lower bounds for both deterministic and randomized algorithms on nonconvex optimization with high-order smoothness assumption. However, 
they did not consider the finite-sum structure which will bring additional dependency on the lower-smoothness parameter $l$ and the number of component functions $n$. \citet{fang2018spider} proved a lower bound for nonconvex finite-sum optimization under conventional smoothness assumption, i.e., $l=-L$. 
Our work extends this line of research, and proves matching lower bounds for nonconvex  finite-sum optimization (and sum-of-nonconvex optimization) under the refined $(l,L)$-smooth assumption.


\noindent\textbf{Existing upper bounds for first-order convex optimization:}
There existing a bunch of work focusing on establishing upper complexity bounds to find $\epsilon$-suboptimal solution for convex finite-sum optimization problems. It is well known that by treating $F(\xb)$ as a whole part, gradient descent can achieve $O(nL/\epsilon)$ IFO complexity for convex functions and $O(nL/\sigma\log(1/\epsilon))$ for $\sigma$-strongly convex functions, and accelerated gradient descent (AGD) \citep{Nesterov1983A} can achieve  $O(n\sqrt{L/\epsilon})$ IFO complexity for convex functions and $O(n\sqrt{L/\sigma}\log(1/\epsilon))$ for $\sigma$-strongly convex functions. Both IFO complexities achieved by AGD are optimal when $n = 1$ \citep{Nesterov1983A}. By using variance reduction technique \citep{roux2012stochastic,johnson2013accelerating,xiao2014proximal,defazio2014saga, mairal2015incremental, bietti2017stochastic}, the IFO complexity can be improved to be $O((n+L/\sigma)\log(1/\epsilon))$ for strongly convex functions. By combining variance reduction and Nesterov's acceleration techniques \citep{Nesterov1983A}, the IFO complexity can be further reduced to $O(n\log(1/\epsilon) + \sqrt{nL/\epsilon})$ for convex functions, and $O((n+\sqrt{nL/\sigma})\log(1/\epsilon))$ for $\sigma$-strongly convex functions \citep{allen2017katyusha}, which matches the lower bounds up to a logarithm factor.

\noindent\textbf{Existing lower bounds for first-order convex optimization:}
For deterministic optimization algorithms, it has been proved that one needs $\Omega(\sqrt{L/\epsilon})$ IFO calls for convex functions, and $\Omega(\sqrt{L/\sigma}\log(1/\epsilon))$ IFO calls for $\sigma$-strongly convex functions to find an $\epsilon$-suboptimal solution.
There is a line of work \citep{woodworth2016tight, lan2017optimal, agarwal2015lower, arjevani2016dimension} establishing the lower bounds for first-order algorithms to find $\epsilon$-suboptimal solution to the convex finite-sum optimization. More specifically, \citet{agarwal2015lower} proved a lower bound $\Omega (n+\sqrt{nL/\sigma}\log(1/\epsilon))$ for strongly convex finite-sum optimization problems, which is valid for deterministic algorithms. \citet{arjevani2016dimension} provided a dimension-free lower bound $\Omega (n+\sqrt{nL/\sigma}\log(1/\epsilon))$ for first-order algorithms with the assumption that any new iterate generated by the algorithm lies in the linear span of gradients and iterates up to the current iteration. \citet{lan2017optimal} proved a lower bound $\Omega (n+\sqrt{L/\sigma}\log(1/\epsilon))$ for a class of randomized first-order algorithms where each component function will be selected by fixed probabilities. \citet{woodworth2016tight} proved a set of lower bounds including $\Omega (n+\sqrt{L/\epsilon})$ for convex functions and $\Omega (n+\sqrt{L/\sigma}\log(1/\epsilon))$ for $\sigma$-strongly convex functions. Besides, \citet{woodworth2016tight}'s results do not need the assumption that the new iterate lies in the span of all the iterates up to the iteration, which is a more general result.

For more details on the upper bound and lower bound results, please refer to Tables \ref{table:average} and \ref{table:sepa}.

\begin{table*}[ht]
\caption{IFO Complexity comparison with the assumption that $\{f_i\}_{i=1}^n$ is average $L$-smooth and $F$ is $(l, L)$-smooth. Here $\Delta=F(\xb^{(0)}) - \inf_{\xb \in \RR^d} F(\xb)$ and $B= \dist(\xb^{(0)}, \cX^*)$, where $\cX^* = \argmin_{\xb \in \RR^d}F(\xb)$. When $l = \sigma$ or 0, the goal is to find an $\epsilon$-suboptimal solution; and when $l = -\sigma<0$, the goal is to find an $\epsilon$-approximate stationary point.}
\label{table:average}
\begin{small}
\begin{center}
\begin{tabular}{cccc}
\toprule
 $\sigma >0$ & $(\sigma,L)$ & $(0,L)$ & $(-\sigma,L)$  \\
\midrule
\multirow{3}{*}{Upper Bounds} & \multirow{2}{*}{$O\bigg(\big(n+n^{3/4}\sqrt{\frac{L}{\sigma}}\big)\log \frac{\Delta}{\epsilon}\bigg)$} & \multirow{2}{*}{$O\big(n+n^{3/4}B\sqrt{\frac{L}{\epsilon}}\big)$}  &  \multirow{2}{*}{$\tilde O\big(\frac{\Delta}{\epsilon^2}(n^{3/4}\sqrt{\sigma L}\land \sqrt{n}L)\big)$} \\
 & & & \\
 & \citep{allen2018katyushax}&\citep{allen2018katyushax} &\citep{allen2017natasha, fang2018spider} \\
\multirow{3}{*}{Lower Bounds} & \multirow{2}{*}{$\Omega\big(n+n^{3/4}\sqrt{\frac{L}{\sigma}}\log \frac{\Delta}{\epsilon}\big)$} & \multirow{2}{*}{$\Omega\big(n+n^{3/4}B\sqrt{\frac{L}{\epsilon}}\big)$}  &  \multirow{2}{*}{$\Omega\big(\frac{\Delta}{\epsilon^2}(n^{3/4}\sqrt{\sigma L}\land \sqrt{n}L)\big)$} \\
 & & & \\
 & (Theorem \ref{linear_theorem_exp_sc})& (Theorem \ref{linear_theorem_exp_c})&(Theorem \ref{linear_theorem_exp_nc}) \\
\bottomrule
\end{tabular}
\end{center}
\end{small}
\end{table*}

\begin{table*}[ht]
\caption{IFO Complexity comparison with the assumption that each $f_i$ is $(l, L)$-smooth. Here $\Delta=F(\xb^{(0)}) - \inf_{\xb \in \RR^d} F(\xb)$ and $B=\dist(\xb^{(0)}, \cX^*)$ where $\cX^* = \argmin_{\xb \in \RR^d}F(\xb)$. When $l = \sigma$ or 0, the goal is to find an $\epsilon$-suboptimal solution; and when $l = -\sigma<0$, the goal is to find an $\epsilon$-approximate stationary point.}
\label{table:sepa}
\begin{small}
\begin{center}
\begin{tabular}{cccc}
\toprule
 $\sigma >0$ & $(\sigma,L)$ & $(0,L)$ & $(-\sigma,L)$  \\
\midrule
\multirow{3}{*}{Upper Bounds} & \multirow{2}{*}{$O\bigg(\big(n+\sqrt{\frac{nL}{\sigma}}\big)\log \frac{\Delta}{\epsilon}\bigg)$} & \multirow{2}{*}{$O\big(n+B\sqrt{\frac{nL}{\epsilon}}\big)$}  &  \multirow{2}{*}{$\tilde O\big(\frac{\Delta}{\epsilon^2}(n\sigma + \sqrt{n\sigma L})\land \sqrt{n}L)\big)$} \\
 & & & \\
 & \multirow{2}{*}{\citep{allen2017katyusha}}&\multirow{2}{*}{\citep{allen2017katyusha}} &\citep{lan2018accelerated} \\
 &&&\citep{fang2018spider}\\
\multirow{3}{*}{Lower Bounds} & \multirow{2}{*}{$\Omega\big(n+\sqrt{\frac{nL}{\sigma}}\log \frac{\Delta}{\epsilon}\big)$} & \multirow{2}{*}{$\Omega\big(n+B\sqrt{\frac{nL}{\epsilon}}\big)$}  &  \multirow{2}{*}{$\Omega\big(\frac{\Delta}{\epsilon^2}(\sqrt{n\sigma L}\land L)\big)$} \\
 & & & \\
 & \citep{woodworth2016tight}& \citep{woodworth2016tight}&(Theorem \ref{linear_theorem_sep_nc}) \\
\bottomrule
\end{tabular}
\end{center}
\end{small}
\end{table*}
\section{Preliminaries}
We first present the formal definitions of $(l,L)$-smoothness and average smoothness, which will be used throughout the proof.
\begin{definition}
For any differentiable function $f: \RR^m \rightarrow \RR$, we say $f$ is $(l, L)$-smooth for some $l\in \RR$ and $L\in \RR^+$  if for any $\xb, \yb \in \RR^m$, it holds that
\begin{align*}
    \frac{l}{2}\|\xb - \yb\|_2^2 &\leq  f(\xb) - f(\yb) - \la \nabla f(\yb), \xb - \yb\ra\notag \\
      &\leq \frac{L}{2}\|\xb - \yb\|_2^2.
\end{align*}
We denote such a function class by $\cS^{(l,L)}$. In particular, we say $f$ is $L$-smooth if $f \in \cS^{(-L,L)}$. 
\end{definition}
Note that if $f$ is twice differentiable, then $f \in \cS^{(l, L)}$ if and only if
$l\Ib \preceq \nabla^2 f(\xb) \preceq L\Ib$ for any $\xb \in \RR^m$.

\begin{definition}\label{def:average smooth}
For any differentiable functions $\{f_i\}_{i=1}^n: \RR^m \rightarrow \RR$, we say $\{f_i\}_{i=1}^n$ is $L$-average smooth for some $L>0$ if $\EE_i \|\nabla f_i(\xb) - \nabla f_i(\yb)\|_2^2 \leq L^2 \|\xb - \yb\|_2^2$ for any $\xb, \yb \in \RR^m$, where $\EE_i X(i) = 1/n\cdot\sum_{i=1}^n X(i)$ for any random variable $X(i)$. We denote such a function class by $\cV^{(L)}$.
\end{definition}
It is worth noting that if $\{f_i\}$ satisfy that for each $i$, $f_i \in \cS^{(-L,L)}$, then $\{f_i\} \in \cV^{(L)}$. 

In this work, we focus on the \emph{linear-span randomized first-order algorithm}, which is defined as follows:

\begin{definition}\label{eq:linear span algorithm}
Given an initial point $\xb^{(0)}$, a \emph{linear-span randomized first-order algorithm} $\cA$ is defined as a measurable mapping from functions $\{f_i\}_{i=1}^n$ to an infinite sequence of point and index pairs $\{(\xb^{(t)}, i_t)\}_{t=0}^\infty$ with random variable $i_t \in [n]$, which satisfies
\begin{align}
    \xb^{(t+1)} \in \text{Lin}\{\xb^{(0)},\dots,\xb^{(t)}, \nabla f_{i_0}(\xb^{(0)}),\dots,\nabla f_{i_t}(\xb^{(t})\}.\notag
\end{align}
\end{definition}

It can be easily checked that most first-order primal finite-sum optimization algorithms, such as SAG \citep{roux2012stochastic}, SVRG \citep{johnson2013accelerating}, SAGA \citep{defazio2014saga} and Katyusha \citep{allen2017katyusha}, KatyushaX \citep{allen2018katyushax}, are linear-span randomized first-order algorithms.

In this work, we prove the lower bounds by constructing adversarial functions which are ``hard enough" for any linear-span randomized first-order algorithms. To demonstrate the construction of adversarial functions, we first introduce the following quadratic function class, which comes from \citet{nesterov2013introductory}.
\begin{definition}\label{def: Q}
Let $Q(\xb; \xi, m, \zeta): \RR^m \rightarrow \RR$ be:
\begin{align}
    Q(\xb; \xi,m,\zeta) 
    &:= \frac{\xi}{2}(\xb_1 - 1)^2 + \frac{1}{2}\sum_{t=1}^{m-1}(\xb_{t+1} - \xb_t)^2 + \frac{\zeta}{2}(\xb_m)^2.\notag
\end{align}
\end{definition}
In our construction, we need the following two important properties of $Q(\xb;\xi,m,\zeta)$. 
\begin{proposition}\label{index-move}
For any $0 \leq \xi, \zeta \leq 1$ and $m \geq 1$,  the following properties hold:
\begin{enumerate}
    \item $Q(\xb; \xi, m, \zeta) \in \cS^{(0,4)}$.
    \item Suppose that $\Ub \in \RR^{m \times d}$ satisfying $\Ub\Ub^\top = \Ib$. Suppose that $\Ub = [\ub^{(1)},...\ub^{(m)}]^\top$. Then for any $\bar\xb$ satisfying $\Ub\bar\xb \in \text{Lin}\{\ub^{(1)},...,\ub^{(t)}\}$, and any differentiable function $\mu:\RR\rightarrow \RR$, we have $\nabla[Q(\Ub\bar \xb;\xi,m,\zeta) + \sum_{i=1}^m \mu(\bar \xb^\top\ub^{(i)})] \in \text{Lin}\{\ub^{(1)},...,\ub^{(t+1)}\}$.
\end{enumerate}
\end{proposition}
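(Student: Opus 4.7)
For the first claim, the plan is to compute the Hessian of $Q$ directly: it is the tridiagonal matrix with diagonal $(\xi+1,2,2,\ldots,2,1+\zeta)$ and off-diagonal entries $-1$. The lower bound $\nabla^2 Q \succeq 0$ is automatic from the sum-of-squares representation in Definition \ref{def: Q} (equivalently, the Hessian factors as $D^\top D$ plus rank-two boundary corrections with coefficients $\xi$ and $\zeta$, where $D$ is the discrete first-difference operator). For the upper bound $\nabla^2 Q \preceq 4\Ib$, Gershgorin's circle theorem is immediate: because $0 \le \xi,\zeta \le 1$, every Gershgorin disc is contained in the interval $[0,4]$, so every eigenvalue of $\nabla^2 Q$ lies in $[0,4]$.

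The second claim rests on a ``zero-chain'' property of $\nabla Q$: each coordinate $[\nabla Q(\yb)]_i$ depends only on $\yb_{i-1}, \yb_i, \yb_{i+1}$ because the quadratic couples only consecutive coordinates. Under the natural dimension-consistent reading of the hypothesis, $\bar\xb$ lies in $\text{Lin}\{\ub^{(1)},\ldots,\ub^{(t)}\}$, and orthonormality of the rows $\{\ub^{(i)}\}$ then forces $(\Ub\bar\xb)_j = \bar\xb^\top \ub^{(j)} = 0$ for every $j>t$. Applying the chain rule, and using that the $i$-th row of $\Ub$ is $\ub^{(i)}$,
\[
\nabla\Bigl[Q(\Ub\bar\xb;\xi,m,\zeta) + \sum_{i=1}^m \mu(\bar\xb^\top\ub^{(i)})\Bigr]
= \sum_{i=1}^m \Bigl([\nabla Q(\Ub\bar\xb)]_i + \mu'(\bar\xb^\top\ub^{(i)})\Bigr)\ub^{(i)}.
\]
The tridiagonal structure annihilates $[\nabla Q(\Ub\bar\xb)]_i$ for every $i \geq t+2$, and each $\mu'(\bar\xb^\top\ub^{(i)})$ with $i \geq t+2$ equals $\mu'(0)$, which is absorbed by the convention on $\mu$ implicit in the adversarial construction (the $\mu$'s ultimately plugged in have vanishing linear part at the origin). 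Only the indices $i \le t+1$ contribute, giving the desired inclusion in $\text{Lin}\{\ub^{(1)},\ldots,\ub^{(t+1)}\}$.

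The main obstacle is nothing more than careful indexing: the tridiagonal coupling allows activity at coordinate $t$ to propagate to coordinate $t+1$ but no farther under one gradient evaluation, which is precisely what justifies the one-step enlargement from $t$ to $t+1$ in the conclusion. Making this propagation precise, and handling the boundary coefficients $\xi+1$ and $1+\zeta$ at the first and last rows, is the content of the zero-chain argument. Proposition \ref{index-move} is exactly the mechanism that will prevent any linear-span first-order algorithm from ``discovering'' more than one new direction $\ub^{(i)}$ per IFO call in the lower-bound proofs to come.
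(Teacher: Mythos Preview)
The paper does not actually supply a proof of Proposition~\ref{index-move}; it is stated as a known fact inherited from Nesterov's book and from the zero-chain analysis of Carmon et al., and is used throughout the lower-bound arguments without further justification. Your proposal is the standard argument and is essentially correct.

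Two remarks are worth making explicit rather than parenthetical. First, your handling of the $\mu'(0)$ term is the right move, but it is not a ``convention'': as literally written (``any differentiable $\mu$'') the claim is false, since $\mu'(0)\neq 0$ would place a nonzero coefficient on every $\ub^{(i)}$. The proposition is only true, and only ever applied, under the additional hypothesis $\mu'(0)=0$, which does hold for the quadratic regularizer $s\mapsto \tfrac{\alpha}{2}s^2$ in $\fsc$ and for the integrand $s\mapsto 120\int_1^s t^2(t-1)/(1+t^2)\,dt$ in $\Gamma$. Second, you correctly diagnose the dimensional mismatch in the hypothesis ``$\Ub\bar\xb\in\text{Lin}\{\ub^{(1)},\ldots,\ub^{(t)}\}$'' and resolve it the intended way, namely $(\Ub\bar\xb)_j=\bar\xb^\top\ub^{(j)}=0$ for all $j>t$, equivalently $\bar\xb\in\text{Lin}\{\ub^{(1)},\ldots,\ub^{(t)}\}$ modulo $\ker\Ub$. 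With those two clarifications your Gershgorin bound for part~1 and tridiagonal zero-chain computation for part~2 are complete.
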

In short, the first property of $Q(\xb; \xi, m, \zeta)$ says that $Q$ is a convex function with $4$-smoothness, and the second property says that for any orthogonal matrix $\Ub$, the composite function $Q(\Ub\xb;\xi,m,\zeta) + \sum_{i=1}^m \mu(\xb^\top\ub^{(i)})$ enjoys the so-called \emph{zero-chain} property \citep{carmon2017lower}: if the current point is $\bar\xb$, then the information brought by an IFO call at the current point can at most increase the dimension of lienar space which $\bar\xb$ belongs to by $1$, which is very important for the proof of lower bounds. 

Based on Definition \ref{def: Q}, one can define the following three function classes: $\fsc$, $\fc$ from \citet{nesterov2013introductory} and $\fnc$ from \citet{carmon2017lower2}. 
We first introduce a class of strongly convex functions $\fsc$, which is originally defined in \citet{nesterov2013introductory}.
\begin{definition}\citep{nesterov2013introductory}
Let $\fsc(\xb; \alpha, m): \RR^m \rightarrow \RR$ be
\begin{align}
    &\fsc(\xb; \alpha, m) := \frac{1-\alpha}{4}Q\bigg(\xb; 1,m,\frac{2\sqrt{\alpha}}{\sqrt{\alpha}+1}\bigg) + \frac{\alpha}{2}\|\xb\|_2^2.\label{sc_def}
\end{align}
\end{definition}
For $\fsc(\xb; \alpha, m)$, we have the following properties.
\begin{proposition}[Chapter 2.1.4, \citet{nesterov2013introductory}]\label{property_sc}
For any $0 \leq \alpha \leq 1$, let $q:=(1-\sqrt{\alpha})/(1+\sqrt{\alpha})$, it holds that
\begin{enumerate}
    \item $\fsc(\xb; \alpha, m) \in \cS^{(\alpha,1)}$. 
    \item $\fsc(0; \alpha,m) - \inf_{\xb \in \RR^m}\fsc(\xb;\alpha,m) \leq q^2(1-q^2)$.
    \item For any $\xb$ satisfying $\xb_m = 0$, we have
    \begin{align}
        \fsc(\xb;\alpha,m) - \inf_\xb\fsc(\xb;\alpha,m) \geq \frac{\alpha}{2}q^{2m+2}.\notag
    \end{align}
\end{enumerate}
\end{proposition}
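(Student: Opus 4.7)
The plan is to exploit the fact that $\fsc(\xb;\alpha,m)$ is a quadratic function with a tridiagonal Hessian, so that all three properties reduce to explicit closed-form calculations. Writing $\fsc(\xb)=\frac{1}{2}\xb^\top H\xb-\langle \bbb,\xb\rangle+c$, where $H=\frac{1-\alpha}{4}\nabla^2 Q(\cdot;1,m,\zeta)+\alpha\Ib$, $\bbb=\frac{1-\alpha}{4}$ times the first standard basis vector, and $c=\frac{1-\alpha}{8}$, Property 1 is immediate from Proposition~\ref{index-move} part 1: since $0\preceq \nabla^2 Q\preceq 4\Ib$, one obtains $\alpha\Ib\preceq H\preceq \Ib$, i.e., $\fsc\in\cS^{(\alpha,1)}$.

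For Property 2 I would solve the linear system $H\xb^*=\bbb$ explicitly. The middle rows give the second-order recurrence $\xb^*_{k+1}-\frac{2(1+\alpha)}{1-\alpha}\xb^*_k+\xb^*_{k-1}=0$, whose characteristic equation $r^2-(q+q^{-1})r+1=0$ has roots $q$ and $1/q$. The boundary row at index $1$ is equivalent to imposing the phantom condition $\xb^*_0=1$ in the same recurrence, while the crucial observation is that the specific choice $\zeta=2\sqrt{\alpha}/(\sqrt{\alpha}+1)$ is tuned so that the boundary row at index $m$ reduces, after substituting $1+\zeta+\frac{4\alpha}{1-\alpha}=(1+\sqrt\alpha)^2/((1-\sqrt\alpha)(1+\sqrt\alpha)) = 1/q$, to $\xb^*_{m-1}=\xb^*_m/q$. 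Both boundary conditions are then satisfied by the pure decaying solution $\xb^*_k=q^k$. Once $\xb^*$ is in hand, the standard quadratic identity $\fsc(0)-\fsc(\xb^*)=\frac{1}{2}\langle \bbb,\xb^*\rangle=\frac{(1-\alpha)q}{8}$ gives a clean closed form, and the stated upper bound follows by a direct algebraic manipulation rewriting $1-\alpha$ in terms of $q$ via $\alpha=\big((1-q)/(1+q)\big)^2$.

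For Property 3, the $\alpha$-strong convexity established in Property 1 yields $\fsc(\xb)-\inf\fsc\geq \frac{\alpha}{2}\|\xb-\xb^*\|_2^2$. Since $\xb^*_m=q^m$ and $\xb_m=0$ by hypothesis, we have $\|\xb-\xb^*\|_2^2\geq (\xb^*_m)^2=q^{2m}\geq q^{2m+2}$ (using $q\in[0,1]$), which gives the claim. The main obstacle in this argument is the boundary analysis in Property 2: one must verify that $\zeta=2\sqrt{\alpha}/(\sqrt{\alpha}+1)$ is precisely the value that forces $\xb^*_k=q^k$ (with no growing component) to solve the last equation of the KKT system. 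This ``reflection trick'' is exactly what distinguishes Nesterov's worst-case strongly convex construction from the merely smooth one (where $\zeta=0$); all remaining steps reduce to elementary algebra in $q$.
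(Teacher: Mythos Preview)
The paper does not prove this proposition; it is quoted directly from Nesterov's book (Chapter~2.1.4) and used as a black box. Your proposal is exactly the classical argument found there: compute the tridiagonal Hessian, solve the three-term recurrence with roots $q$ and $1/q$, observe that the special boundary value $\zeta=2\sqrt\alpha/(1+\sqrt\alpha)$ kills the growing mode so that $\xb^*_k=q^k$, and then read off both the optimality gap at the origin (Property~2) and, via $\alpha$-strong convexity together with $\xb^*_m=q^m$, the lower bound in Property~3. So there is nothing to compare---your route and the intended one coincide.

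One small caution on Property~2. Your computation gives the exact value
\[
\fsc(0;\alpha,m)-\inf_\xb\fsc(\xb;\alpha,m)=\frac{(1-\alpha)q}{8}=\frac{q^2}{2(1+q)^2},
\]
after which you assert that ``the stated upper bound follows by a direct algebraic manipulation.'' Be careful here: the bound printed in the statement, $q^2(1-q^2)$, is actually \emph{false} as $q\to 1$ (equivalently $\alpha\to 0$), since the left side tends to $1/8$ while $q^2(1-q^2)\to 0$. This is a typo in the paper; the proof of Theorem~\ref{linear_theorem_exp_sc} later uses the intended form $q^2/(1-q^2)$. Your exact value does satisfy
\[
\frac{q^2}{2(1+q)^2}\ \le\ \frac{q^2}{2(1-q^2)}\ \le\ \frac{q^2}{1-q^2},
\]
the first inequality because $(1+q)^2\ge(1+q)(1-q)$ for $q\in[0,1)$. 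So your method is correct; just do not try to force it under the literal printed bound.
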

Next we introduce a class of general convex functions $\fc(\xb; m)$, which is also defined in \citet{nesterov2013introductory}.
\begin{definition}\citep{nesterov2013introductory}
Let $\fc(\xb; m):\RR^{2m-1}\rightarrow 1$ be
\begin{align}
    \fc (\xb; m) 
    :& = \frac{1}{4}Q(\xb; 1,2m-1,1).\label{fc_def}
\end{align}
\end{definition}
We have the following properties about $\fc(\xb; m)$.
\begin{proposition}[Chapter 2.1.2, \citet{nesterov2013introductory}]\label{property_c}
We have 
\begin{enumerate}
    \item $f_{\cN\text{c}} (\xb; m) \in \cS^{(0,1)}$.
     \item Let $\cX^* = \argmin_{\xb \in \RR^d}\fc(\xb;m)$ be the optimal solution set, we have $\dist^2(0,\cX^*) \leq 2m/3$.
    \item For any $\xb$ which satisfies that $|\xb_m|=...=|\xb_{2m-1}|=0$, we have $f_{\cN\text{c}} (\xb; m) - \inf_{\xb\in \RR^{2m-1}} f_{\cN\text{c}} (\xb; m) \geq 1/(16m)$.
\end{enumerate}
\end{proposition}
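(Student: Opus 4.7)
The plan is to reduce all three claims to explicit linear algebra on the tridiagonal matrix $A\in\RR^{(2m-1)\times(2m-1)}$ with diagonal entries $2$ and sub/super-diagonal entries $-1$, which arises from expanding $Q$. I would first observe that
\[
Q(\xb;1,2m-1,1)=\frac{1}{2}\xb^\top A\xb-\xb_1+\frac{1}{2},
\]
so $\fc(\xb;m)=\tfrac{1}{4}(\tfrac12\xb^\top A\xb-\xb_1+\tfrac12)$ and $\nabla^2\fc=\tfrac14 A$.

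For part 1, $Q$ is a sum of squares, so $A\succeq 0$; Gershgorin's disc theorem applied to $A$ (each diagonal entry equals $2$, each absolute off-diagonal row-sum is at most $2$) then gives $A\preceq 4\Ib$. Dividing by $4$ shows $\nabla^2\fc\in[0,\Ib]$, i.e.\ $\fc\in\cS^{(0,1)}$.

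For part 2, the linear forms $\xb_1-1$, $\xb_{t+1}-\xb_t$, $\xb_{2m-1}$ are linearly independent and span $\RR^{2m-1}$, so $A\succ 0$ and $\fc$ has a unique minimizer $\xb^*$ solving $A\xb^*=e_1$. The interior equations $-\xb^*_{t-1}+2\xb^*_t-\xb^*_{t+1}=0$ force $\xb^*$ to be affine in $t$; matching the two boundary equations at $t=1$ and $t=2m-1$ then pins down $\xb^*_t=(2m-t)/(2m)$, so
\[
\dist^2(0,\cX^*)=\|\xb^*\|_2^2=\frac{1}{(2m)^2}\sum_{k=1}^{2m-1}k^2=\frac{(2m-1)(4m-1)}{12m}\le \frac{2m}{3}.
\]

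For part 3, I would observe that if $\xb_m=\dots=\xb_{2m-1}=0$, every term of $Q(\xb;1,2m-1,1)$ involving an index beyond $m-1$ vanishes, while $(\xb_m-\xb_{m-1})^2$ collapses to $\xb_{m-1}^2$. Hence the restriction of $Q$ to this subspace equals $Q(\xb_{1:m-1};1,m-1,1)$, an instance of the same family with parameter $m-1$. Applying the analysis of part 2 twice, once with $N=2m-1$ and once with $N=m-1$, together with the standard quadratic identity $\min Q=\tfrac12-\tfrac12 e_1^\top A^{-1}e_1=\tfrac12-\tfrac12\xb^*_1=\tfrac{1}{2(N+1)}$, I get $\inf\fc(\cdot;m)=\tfrac{1}{16m}$ versus a restricted infimum of $\tfrac{1}{8m}$; the resulting gap is $\tfrac{1}{16m}$, as claimed.

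The main obstacle is the clean solution of the tridiagonal recurrence $A\xb^*=e_1$; once the closed form $\xb^*_t=(2m-t)/(2m)$ is in hand, everything else is bookkeeping of the overall $\tfrac14$ rescaling, and no conceptual difficulty should remain.
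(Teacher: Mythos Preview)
Your argument is correct and is essentially the classical computation from Nesterov's textbook: expand $Q$ via the tridiagonal matrix $A$, solve the linear recurrence $A\xb^*=e_1$ to obtain $\xb^*_t=(2m-t)/(2m)$, and read off both $\|\xb^*\|_2^2$ and the optimal value $\tfrac12-\tfrac12\xb^*_1$ from this closed form. The restriction step in part~3 is exactly right, and the resulting gap $\tfrac{1}{8m}-\tfrac{1}{16m}=\tfrac{1}{16m}$ matches. (One harmless edge case: for $m=1$ the restricted problem is zero-dimensional, so your formula for the restricted infimum does not literally apply; but then $\xb=0$ is forced and the claimed bound holds directly.)

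The paper does not supply its own proof of this proposition; it simply quotes it as a known result from \citet{nesterov2013introductory}, Chapter~2.1.2. Your write-up is the standard proof that appears there, so there is no methodological difference to discuss.
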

The above two function classes $\fsc$ and $\fc$ will be used to prove the lower bounds for convex optimization. Finally we introduce $\fnc$, which is original proposed in \citet{carmon2017lower2}, and we will use it to prove the lower bounds for nonconvex optimization.
\begin{definition}
Let $\fnc(\xb; \alpha,m):\RR^{m+1}\rightarrow \RR$ be 
\begin{align}
    \fnc (\xb; \alpha, m) :
    & = Q(\xb; \sqrt{\alpha}, m+1,0) + \alpha\Gamma(\xb),\notag
\end{align}
where $\Gamma(\xb): \RR^{m+1}\rightarrow \RR$ is defined as
\begin{align}
    \Gamma(\xb) := \sum_{i=1}^{m}120\int_1^{\xb_i}\frac{t^2(t-1)}{1+t^2}dt.\notag
\end{align}
\end{definition}
We have the following properties about $\fnc$.
\begin{proposition}[Lemmas 2, 3, 4, \citet{carmon2017lower2}]\label{property_nc}
For any $0 \leq \alpha \leq 1$, it holds that
\begin{enumerate}
    \item $\Gamma(\xb) \in \cS^{(-\lg,\lg)}$ and $\fnc(\xb;\alpha,m) \in \cS^{(-\alpha\lg,4+ \alpha\lg)}$.
    \item $\fnc(0; \alpha,m) - \inf_{\xb\in \RR^{m+1}} \fnc(\xb;\alpha,m) \leq \sqrt{\alpha}/2+10\alpha m$.
    \item For $\xb$ which satisfies that $\xb_m = \xb_{m+1} = 0$, we have $\|\nabla \fnc(\xb; \alpha,m)\|_2 \geq \alpha^{3/4}/4$.
\end{enumerate}
\end{proposition}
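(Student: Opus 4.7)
The proof of Proposition~\ref{property_nc} establishes the three claims in turn, essentially transcribing the arguments of Lemmas~2--4 in \citet{carmon2017lower2} while tracking constants so they match the $\lg=360$ convention fixed in the paper's notation. Throughout, I would exploit the coordinate-separable structure of $\Gamma$ and the banded, tridiagonal structure of $\nabla Q$.

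For the smoothness claim, write $\Gamma(\xb)=\sum_{i=1}^{m}\gamma(\xb_i)$ with $\gamma(s):=120\int_{1}^{s}\frac{t^{2}(t-1)}{1+t^{2}}\,dt$. Then $\nabla^{2}\Gamma$ is diagonal with entries $\gamma''(\xb_i)=120\cdot\frac{d}{dt}\!\left[\frac{t^{2}(t-1)}{1+t^{2}}\right]_{t=\xb_i}$, and a short calculus check shows $|\gamma''(s)|\le 3\cdot 120=\lg$ for every $s\in\RR$, yielding $\Gamma\in\cS^{(-\lg,\lg)}$. Proposition~\ref{index-move} already supplies $Q(\cdot;\sqrt{\alpha},m+1,0)\in\cS^{(0,4)}$, so by additivity of Hessians $\fnc(\cdot;\alpha,m)=Q+\alpha\Gamma\in\cS^{(-\alpha\lg,\,4+\alpha\lg)}$, which is claim~(1).

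For the suboptimality bound, compute $\fnc(0;\alpha,m)=Q(0;\sqrt{\alpha},m+1,0)+\alpha m\,\gamma(0)=\sqrt{\alpha}/2+\alpha m\,\gamma(0)$. Using the elementary antiderivative $\int_{0}^{1}\frac{t^{3}-t^{2}}{1+t^{2}}\,dt=\frac{1}{2}-\frac{1}{2}\ln 2-1+\frac{\pi}{4}$ one verifies $|\gamma(0)|\le 10$. A matching global lower bound $\inf_{\xb}\fnc(\xb;\alpha,m)\ge 0$ follows from the non-negativity of $Q$ combined with the observation that $\gamma$ is bounded below on $\RR$ by an absolute constant, after which claim~(2) is obtained by absorbing the $O(1)$ per-coordinate slack into the $10\alpha m$ term.

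The most delicate step is claim~(3), and it is the main obstacle. The intuition is the zero-chain property of $\fnc$: when $\xb_m=\xb_{m+1}=0$, the quadratic couplings in $Q$ isolate coordinates $\{\xb_1,\dots,\xb_{m-1}\}$ from $\xb_{m+1}$, and the $m$-th partial collapses to $[\nabla\fnc(\xb;\alpha,m)]_{m}=-\xb_{m-1}$. I would argue by dichotomy on $|\xb_{m-1}|$: if $|\xb_{m-1}|\ge \alpha^{3/4}/4$, the $m$-th partial alone suffices; otherwise I propagate backward through the tridiagonal coupling $\xb_{t+1}-\xb_{t}$ and the nonlinear forcing $\alpha\gamma'(\xb_i)=120\alpha\,\xb_i^{2}(\xb_i-1)/(1+\xb_i^{2})$, showing that either some $\xb_i$ is trapped away from the stable fixed point $1$ (making $|\alpha\gamma'(\xb_i)|\gtrsim\alpha$) or the chain of near-equalities forces $\sqrt{\alpha}\,|\xb_1-1|\gtrsim\alpha^{3/4}$. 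The main bookkeeping issue is that the scales $\sqrt{\alpha}$, $\alpha^{3/4}$, and $\alpha$ interact nonlinearly along the chain; I would follow the inductive argument of Lemma~4 in \citet{carmon2017lower2} to make this rigorous, choosing constants so that the resulting bound is exactly $\alpha^{3/4}/4$.
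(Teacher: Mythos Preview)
The paper does not supply its own proof of Proposition~\ref{property_nc}; it simply records the statement as a restatement of Lemmas~2--4 in \citet{carmon2017lower2} and uses it as a black box thereafter. Your sketch is therefore not competing against anything in the paper, and as a transcription of the cited arguments it is essentially on target.

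Two small remarks. For claim~(2), the phrasing about $\gamma$ being ``bounded below by an absolute constant'' and then ``absorbing the $O(1)$ per-coordinate slack'' is looser than necessary and, read literally, would not give the stated constant $10$. In fact $\gamma'(s)=120\,s^{2}(s-1)/(1+s^{2})$ is nonpositive for $s\le 1$ and nonnegative for $s\ge 1$, so $\gamma$ attains its global minimum $\gamma(1)=0$; hence $\Gamma\ge 0$ exactly, and together with $Q\ge 0$ this yields $\inf_{\xb}\fnc(\xb;\alpha,m)\ge 0$. Since $\gamma(0)=120\int_{0}^{1}\frac{t^{2}-t^{3}}{1+t^{2}}\,dt=120\bigl(\tfrac{1}{2}-\tfrac{\pi}{4}+\tfrac{1}{2}\ln 2\bigr)\approx 7.4<10$, the bound $\fnc(0;\alpha,m)-\inf_{\xb}\fnc\le\sqrt{\alpha}/2+10\alpha m$ follows with no slack to absorb. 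For claim~(3), your computation $[\nabla\fnc(\xb;\alpha,m)]_{m}=-\xb_{m-1}$ under $\xb_{m}=\xb_{m+1}=0$ is correct (using $\gamma'(0)=0$), and the dichotomy on $|\xb_{m-1}|$ together with the backward-chain heuristic is the right picture; getting the constants to land on exactly $\alpha^{3/4}/4$ does require the careful induction you defer to \citet{carmon2017lower2}.
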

\section{Main Results}\label{sec:mainresult}
In this section we present our lower bound results. We start with the sum-of-nonconvex (but convex) optimization setting, then move on to the general nonconvex finite-sum optimization setting.

\subsection{$F$ is Convex -- Suboptimal Solution}
We first show the lower bounds for $F \in \cS^{(l,L)}$ with $l \geq 0$, and our goal is to find an $\epsilon$-suboptimal solution. We first show the result when $F$ is $\sigma$-strongly convex and $\{f_i\}_{i=1}^n \in \cV^{(L)}$. 
\begin{theorem}\label{linear_theorem_exp_sc}
For any linear-span randomized first-order algorithm $\cA$ and any $L,\sigma, n, \Delta, \epsilon$ such that $\epsilon \leq 8\Delta n^{7/4}\sigma^{3/2}L^{-3/2}$, there exist a dimension $d = O(n+n^{3/4}\sqrt{L/\sigma}\log(1/\epsilon))$ and functions $\{f_i\}_{i=1}^n:\RR^d \rightarrow \RR$ which satisfy that $\{f_i\}_{i=1}^n \in \cV^{(L)}$, $F \in \cS^{(\sigma,L)}$ and $F(\xb^{(0)}) - \inf_{\xb \in \RR^d} F(\xb) \leq \Delta$. In order to find $\hat\xb \in \RR^d$ such that $\EE F(\hat\xb) - \inf_{\xb \in\RR^d}F(\xb) \leq \epsilon$, $\cA$ needs at least
\begin{align}
    \Omega\bigg(n + n^{3/4}\sqrt{\frac{L}{\sigma}}\log\bigg(\frac{n\sigma\Delta}{L\epsilon}\bigg)\bigg)\label{mainth_1}
\end{align}
IFO calls.
\end{theorem}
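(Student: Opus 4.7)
The plan is to adapt Nesterov's strongly convex hard instance $\fsc$ to the finite-sum setting by distributing it across $n$ mutually orthogonal subspaces, with scalings chosen to satisfy \emph{average} smoothness rather than per-component smoothness. Fix a chain length $m = \Theta(n^{-1/4}\sqrt{L/\sigma}\log(n\sigma\Delta/(L\epsilon)))$ and an ambient dimension $d$ large enough that there exist $\{\Ub^{(i)}\}_{i=1}^n \in \cO(m,d,n)$; draw such a family uniformly at random from this set. Then define
\[
f_i(\xb) := c\cdot\fsc\bigl(\Ub^{(i)}\xb;\,\alpha,\,m\bigr) + \frac{\sigma}{2}\|\xb\|_2^2,
\]
with $c = \Theta(\sqrt{n}L)$ and $\alpha = \Theta(\sqrt{n}\sigma/L)$, tuned so that the per-subspace problem has effective condition number $\Theta(L/(\sqrt{n}\sigma))$. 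Because the row spaces of the $\Ub^{(i)}$ are mutually orthogonal, the Hessians of different summands live on disjoint $m$-dimensional subspaces; this is the source of the extra $n^{1/4}$ gain over the per-component convex lower bound of \citet{woodworth2016tight}.

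I would first verify the regularity constraints. Proposition \ref{property_sc}(1) gives $\fsc\in\cS^{(\alpha,1)}$, so expanding and using $\|(\Ub^{(i)})^\top\vb\|_2=\|\vb\|_2$ yields $\|\nabla f_i(\xb)-\nabla f_i(\yb)\|_2^2 \le 2c^2\|\Ub^{(i)}(\xb-\yb)\|_2^2 + 2\sigma^2\|\xb-\yb\|_2^2$. Averaging over $i$ and using $\sum_i\|\Ub^{(i)}(\xb-\yb)\|_2^2 \le \|\xb-\yb\|_2^2$ gives $\EE_i\|\nabla f_i(\xb)-\nabla f_i(\yb)\|_2^2 \le L^2\|\xb-\yb\|_2^2$, establishing $\{f_i\}\in\cV^{(L)}$. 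The summand Hessians $(c/n)(\Ub^{(i)})^\top\nabla^2\fsc\,\Ub^{(i)}$ have spectra in $[c\alpha/n,c/n]$ on mutually orthogonal subspaces, so $\nabla^2 F\in[\sigma\Ib,\,(c/n+\sigma)\Ib]\subseteq[\sigma\Ib,L\Ib]$ and hence $F\in\cS^{(\sigma,L)}$. The initial gap at $\xb^{(0)}=\mathbf{0}$ is controlled by Proposition \ref{property_sc}(2), and the hypothesis $\epsilon\le 8\Delta n^{7/4}\sigma^{3/2}L^{-3/2}$ is precisely what is needed to keep the implied $m$ and $\alpha$ in the valid range of Proposition \ref{property_sc}.

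The heart of the lower bound is a zero-chain argument. By Proposition \ref{index-move}(2) applied to each $\fsc(\Ub^{(i)}\xb;\alpha,m)$, if the projection $\Ub^{(i)}\xb^{(t)}$ lies in the span of the first $k$ rows of $\Ub^{(i)}$, then $\nabla f_i(\xb^{(t)})$ lies in $\text{Lin}\{\xb^{(t)}\}$ together with the span of the first $k+1$ rows of $\Ub^{(i)}$; the subspaces indexed by $j\ne i$ are left untouched because the extra $(\sigma/2)\|\xb\|_2^2$ term contributes only a gradient parallel to $\xb^{(t)}$. Consequently, after $T$ IFO calls with $t_i$ queries to component $i$ (so $\sum_i t_i = T$), the projection of $\xb^{(T)}$ onto the row span of $\Ub^{(i)}$ lies in the span of the first $t_i$ rows, and in particular its $m$-th coordinate is zero whenever $t_i<m$. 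Proposition \ref{property_sc}(3) then yields a per-subspace suboptimality of order $(c\alpha/n)\cdot q^{2t_i+2}$ with $q = 1-\Theta(n^{1/4}\sqrt{\sigma/L})$. Summing over $i$ and demanding the total be at most $\epsilon$ forces $t_i = \Omega(m)$ for $\Omega(n)$ indices, which gives $T = \Omega(nm) = \Omega(n^{3/4}\sqrt{L/\sigma}\log(n\sigma\Delta/(L\epsilon)))$. The separate $\Omega(n)$ floor in \eqref{mainth_1} follows from the standard observation that any algorithm must query every component at least once to gain any information about it.

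The main obstacle will be handling the randomness of $\cA$, which picks $i_t$ adaptively based on previously observed gradients, so a purely deterministic adversary cannot anticipate the access pattern. The remedy, standard in finite-sum lower bounds \citep{woodworth2016tight,fang2018spider}, is to draw $\{\Ub^{(i)}\}$ uniformly from $\cO(m,d,n)$ in a comfortably oversized ambient dimension $d$ and to show that, with high probability over this draw, the iterates of $\cA$ have vanishing inner products with any row of $\Ub^{(i)}$ not yet ``unlocked'' by a prior query to $f_i$. This promotes the deterministic zero-chain statement into a high-probability one, after which averaging over both sources of randomness yields the claimed in-expectation lower bound on $\EE F(\hat\xb) - \inf_{\xb} F(\xb)$.
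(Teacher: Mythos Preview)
Your construction and overall strategy match the paper's: place $n$ independent copies of Nesterov's strongly convex chain $\fsc$ on mutually orthogonal subspaces, scale by $\sqrt n$ so that the \emph{average}-smoothness parameter (Definition~\ref{def:average smooth}) is $L$ rather than $\sqrt n L$, and combine the zero-chain property (Proposition~\ref{index-move}) with pigeonhole to force $\Omega(nm)$ queries. The paper does not add a separate $(\sigma/2)\|\xb\|_2^2$ term; the strong convexity of $F$ already comes from the $\alpha\|\cdot\|_2^2$ term built into $\fsc$, and Lemma~\ref{avsmooth} gives $\bar F\in\cS^{(\alpha/\sqrt n,1)}$ directly. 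Your extra regularizer is harmless but redundant.

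The substantive difference is in how you propose to handle randomization. You flag ``the randomness of $\cA$'' as the main obstacle and plan to draw $\{\Ub^{(i)}\}$ uniformly in an oversized ambient dimension, following \citet{woodworth2016tight}. That machinery is not needed here. Because the theorem is stated only for \emph{linear-span} algorithms (Definition~\ref{eq:linear span algorithm}), the zero-chain conclusion of Proposition~\ref{index-move} holds deterministically for every realization of the index sequence $\{i_t\}$, regardless of how adaptively it is chosen: a query to $f_j$ adds only vectors in the row space of $\Ub^{(j)}$, which is orthogonal to that of $\Ub^{(i)}$. Hence a single fixed choice of $\{\Ub^{(i)}\}\in\cO(T,Tn,n)$ already defeats every deterministic linear-span algorithm pointwise, and therefore every randomized one in expectation. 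The paper does exactly this (invoking Yao's principle with a point-mass input distribution). The random-rotation technique you describe is the right tool only when one drops the linear-span restriction; your plan would work, but it is over-engineered for the stated theorem.

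Two small corrections. Proposition~\ref{property_sc}(3) gives suboptimality $\ge(\alpha/2)q^{2m+2}$ whenever the $m$-th coordinate vanishes; the exponent is $2m+2$, not $2t_i+2$, so the per-subspace bound does not track how many queries component $i$ has received --- it is a fixed threshold that triggers once $t_i<m$. This does not affect your conclusion, since the pigeonhole step only needs the coarser statement. And the $\Omega(n)$ floor is not quite ``any algorithm must query every component once'': the paper obtains it via a separate hard instance (Lemma~\ref{omega_n}) in the regime $\sqrt n\,\sigma/L>1/4$, where the main construction degenerates.
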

Next we show the result when $F$ is convex and $\{f_i\}_{i=1}^n \in \cV^{(L)}$. 
\begin{theorem}\label{linear_theorem_exp_c}
For any linear-span randomized first-order algorithm $\cA$ and any $L,n, B, \epsilon$ such that $\epsilon \leq LB^2/4$ there exist a dimension $d = O(n+n^{3/4}\sqrt{L/\epsilon})$ and functions $\{f_i\}_{i=1}^n:\RR^d \rightarrow \RR$ which satisfy that $\{f_i\}_{i=1}^n \in \cV^{(L)}$, $F \in \cS^{(0,L)}$, and $\dist(\xb^{(0)}, \cX^*) \leq B$ where $\cX^* = \argmin_{\xb \in \RR^d}F(\xb)$. In order to find $\hat\xb \in \RR^d$ such that $\EE F(\hat\xb) - \inf_{\xb \in\RR^d}F(\xb) \leq \epsilon$, $\cA$ needs at least 
\begin{align}
    \Omega \bigg(n+ n^{3/4}B\sqrt{\frac{L}{\epsilon}}\bigg)\label{mainth_2}
\end{align}
IFO calls.
\end{theorem}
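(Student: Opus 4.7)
The plan is to mirror the construction used for Theorem \ref{linear_theorem_exp_sc}, but replace Nesterov's strongly convex hard function $\fsc$ with the convex version $\fc$ and rescale so that the lower bound is driven by a function-value suboptimality gap rather than by strong convexity. First I would fix an integer $m = \Theta(n^{-1/4}B\sqrt{L/\epsilon})$ to be tuned, set the ambient dimension to $d = n(2m-1) + O(1)$, and draw $n$ mutually orthogonal frames $\{\Ub^{(i)}\}_{i=1}^n \in \cO(2m-1,d,n)$. The components $f_i$ are then built from a length-$\Theta(nm)$ Nesterov convex chain distributed across the $n$ components: each $f_i$ carries a scaled copy of $\fc$ acting through $\Ub^{(i)}$, together with cross-component pieces that cancel in the average. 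The overall scaling is calibrated so that (i) $F = \frac{1}{n}\sum_i f_i \in \cS^{(0,L)}$; (ii) $\{f_i\} \in \cV^{(L)}$, exploiting the factor-$\sqrt{n}$ slack between component smoothness and average smoothness that the orthogonal-support structure provides; and (iii) $\dist(\xb^{(0)},\cX^*) \leq B$ from $\xb^{(0)} = 0$ via Proposition \ref{property_c}.2.

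Next I would establish a coordinate-progress lemma. Using the zero-chain property of Proposition \ref{index-move}.2 together with the randomness of $\{\Ub^{(i)}\}$, I would show that after $T$ IFO calls the iterate $\xb^{(T)}$ of any linear-span algorithm lies in a subspace that, block by block, has advanced by at most the number of times that block was queried. A pigeonhole argument then forces at least one block to have its last $\Omega(m - T/n)$ chain coordinates untouched whenever $T = O(nm)$. Applying Proposition \ref{property_c}.3 to that block and multiplying through by the global scaling yields $\EE[F(\xb^{(T)})] - \inf F = \Omega(LB^2/(n^{3/2} m^2))$. Setting this expression equal to $\epsilon$, solving for $m$, and recalling that $T = \Omega(nm)$ is needed to rule out this regime yields the target $T = \Omega(n + n^{3/4}B\sqrt{L/\epsilon})$; the leading $\Omega(n)$ term is unavoidable simply to query every component at least once.

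The main obstacle is simultaneously satisfying the three design constraints on the construction. The zero-chain structure requires that each IFO call to $f_i$ advance only the $i$-th block by one coordinate; the $L$-average smoothness bound restricts how aggressively each individual $f_i$ may be scaled; and the convexity of $F$ demands that any nonconvex pieces inside the $f_i$'s cancel in the average. Balancing these three requires the same Gram--Schmidt/random-rotation machinery used in Theorem \ref{linear_theorem_exp_sc}, but with the quadratic regularization term of $\fsc$ removed and the scaling retuned from $\sqrt{L/\sigma}\log(1/\epsilon)$ to $B\sqrt{L/\epsilon}$. I expect most of the technical work to be a direct adaptation of the strongly convex argument, with the genuine difficulty concentrated in selecting the scaling $m$ and in verifying the distance-to-optimum bound $\dist(\xb^{(0)},\cX^*) \leq B$ in place of the initial function-value gap $\Delta$ used in the strongly convex theorem.
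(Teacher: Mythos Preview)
Your outline mirrors the paper's construction---$n$ orthogonal copies of Nesterov's convex chain $\fc$, each scaled by $\sqrt{n}$ to saturate the average-smoothness budget, then globally rescaled---but the pigeonhole step as you state it loses a factor of $\sqrt{n}$ and does not deliver the claimed bound. You argue that after $T$ calls \emph{at least one} block has advanced at most $T/n$ coordinates, and then apply Proposition~\ref{property_c}.3 to that single block. With the scaling you describe ($\lambda/\beta^2=L$ and $\beta\sqrt{nm}\leq B$, hence $\lambda=\Theta(LB^2/(nm))$), one untouched block contributes
\[
F(\xb^{(T)})-\inf F \;\geq\; \lambda\cdot\frac{1}{\sqrt{n}}\cdot\frac{1}{16m}\;=\;\Theta\!\Big(\frac{LB^2}{n^{3/2}m^2}\Big),
\]
exactly as you write. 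But equating this to $\epsilon$ gives $m=\Theta(n^{-3/4}B\sqrt{L/\epsilon})$, not $n^{-1/4}B\sqrt{L/\epsilon}$, so $nm=\Theta(n^{1/4}B\sqrt{L/\epsilon})$---short of the target by $\sqrt{n}$. Your announced value of $m$ and your suboptimality formula are mutually inconsistent.

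The repair, which is what the paper does, is to count blocks rather than pick one: after $T<nm/2$ calls the total coordinate progress is $T$, so at most $T/m<n/2$ blocks can have reached coordinate $m$, and therefore \emph{more than $n/2$ blocks} still have coordinates $m,\ldots,2m-1$ equal to zero. Summing Proposition~\ref{property_c}.3 over all of them (the terms are nonnegative since each $\fc$ is convex) gives $\bar F(\xb^{(T)})-\inf\bar F\geq \sqrt{n}/(32m)$, which after rescaling is $\Theta(LB^2/(\sqrt{n}\,m^2))$; now $m=\Theta(n^{-1/4}B\sqrt{L/\epsilon})$ is correct and $nm/2=\Theta(n^{3/4}B\sqrt{L/\epsilon})$ follows. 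Two minor remarks: no ``cross-component nonconvex pieces that cancel'' are needed, since each $\bar f_i=\sqrt{n}\,\fc(\Ub^{(i)}\cdot)$ is itself convex; and the paper does not randomize the frames $\{\Ub^{(i)}\}$ but instead fixes them, proves the bound for every deterministic index sequence, and invokes Yao's minimax principle. The additive $\Omega(n)$ term comes from a separate construction (Lemma~\ref{omega_n_convex}) in the regime $\epsilon\geq LB^2/(16\sqrt{n})$.
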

\begin{remark}
Our lower bounds \eqref{mainth_1} and \eqref{mainth_2} are tight, because they have been achieved by {SDCA without Duality} \citep{shalev2016sdca} for $l = \sigma$ and {KatyushaX} \citep{allen2018katyushax} for $ l =\sigma$ and $l = 0$ up to a logarithm factor. 
\end{remark}
\begin{remark}
It is interesting to compare \eqref{mainth_1} and \eqref{mainth_2} with the corresponding lower bounds for convex finite-sum optimization in \citet{woodworth2016tight}, which proves $\tilde\Omega(n+\sqrt{nL/\sigma})$ lower bound for strongly convex functions and $\Omega(n+\sqrt{nL/\epsilon})$ for convex functions. The dependence on $n$ is $n^{3/4}$ in the nonconvex case when $\epsilon \ll 1$, as opposed to $n^{1/2}$ in the (strongly) convex case. This suggests a fundamental gap. This gap has been observed firstly by \citet{shalev2016sdca} from view of the upper bounds. Our lower bound results suggest that such a gap cannot be removed.  
\end{remark}

\subsection{$F$ is Nonconvex -- Approximate Stationary Point}
Next we show the lower bounds when $F$ is $\sigma$-almost convex. For this case our goal is to find an $\epsilon$-approximate stationary point. We first present the lower result when $\{f_i\}_{i=1}^n \in \cV^{(L)}$ .
\begin{theorem}\label{linear_theorem_exp_nc}
For any linear-span randomized first-order algorithm $\cA$ and any $L,n, \Delta, \epsilon$ with $\epsilon^2 \leq (\Delta\sigma\land L\Delta n^{-1/2})/10^5$, there exist a dimension $d = O(\Delta/\epsilon^2\cdot(n^{3/4}\sqrt{\sigma L}\land \sqrt{n}L))$ and functions $\{f_i\}_{i=1}^n:\RR^d \rightarrow \RR$ which satisfy that $\{f_i\}_{i=1}^n \in \cV^{(L)}$, $F \in \cS^{(-\sigma, L)}$ and $F(\xb^{(0)}) - \inf_{\xb \in \RR^d} F(\xb) \leq \Delta$. In order to find $\hat\xb \in \RR^d$ such that $\EE\|\nabla F(\hat\xb)\|_2 \leq \epsilon$, $\cA$ needs at least
\begin{align}
    \Omega \bigg(\frac{\Delta}{\epsilon^2}\big[n^{3/4}\sqrt{\sigma L}\land \sqrt{n}L\big]\bigg)\label{mainth_3}
\end{align}
IFO calls.
\end{theorem}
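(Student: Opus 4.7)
The plan is to construct two separate adversarial finite-sum families: a main one that produces the $n^{3/4}\sqrt{\sigma L}\Delta/\epsilon^2$ term, and a simpler single-block one that produces the $\sqrt{n}L\Delta/\epsilon^2$ term dominating in the very-small-$\sigma$ regime. Reporting whichever of the two is tighter for the given parameters yields the $\land$ inside the theorem.

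For the main construction, I would take the almost-convex zero-chain block $\fnc(\cdot;\alpha,m)$ from Proposition \ref{property_nc} and distribute it across $n$ mutually orthogonal subspaces of $\RR^d$. Concretely, pick a matrix sequence $\{\Ub^{(i)}\}_{i=1}^n \in \cO(m+1,d,n)$ with $d=n(m+1)$ and define
$$
  f_i(\xb) \;=\; C\sqrt{n}\,\fnc\!\bigl(\Ub^{(i)}\xb;\,\alpha,\,m\bigr) \;+\; \text{(common quadratic)},
$$
for parameters $C,\alpha,m$ to be tuned. The common quadratic is chosen so that, after averaging, $F=\frac{1}{n}\sum_i f_i$ lies in $\cS^{(-\sigma,L)}$; because the $\Ub^{(i)}$'s have pairwise orthogonal rows, $F$ splits into an orthogonal sum of independent $\fnc$ pieces plus a common quadratic, making the smoothness check essentially per-block. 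The average-smoothness condition $\EE_i\|\nabla f_i(\xb)-\nabla f_i(\yb)\|_2^2 \le L^2\|\xb-\yb\|_2^2$ likewise reduces to a telescoping sum across orthogonal blocks precisely because each $f_i$ acts only on its own block, which is what lets the $C\sqrt{n}$ amplification be absorbed. Proposition \ref{property_nc}.2 then supplies $F(\xb^{(0)})-\inf F \leq \Delta$ after tuning $m$.

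The heart of the lower bound is the zero-chain argument. Applying Proposition \ref{index-move}.2 separately to each $\fnc(\Ub^{(i)}\cdot;\alpha,m)$, an IFO call at $(\xb^{(t)},i_t)$ can extend the iterate subspace along the rows of $\Ub^{(i_t)}$ by at most one new coordinate, and progress is decoupled across distinct blocks by orthogonality. Letting $T_i$ denote the number of queries directed to component $i$ during the first $T$ rounds, induction on $t$ yields that only the first $T_i+1$ coordinates of $\Ub^{(i)}\xb^{(t)}$ can ever be nonzero. Property 3 of Proposition \ref{property_nc} then implies that on every block with $T_i<m$ the $i$-th piece contributes at least $C\sqrt{n}\,\alpha^{3/4}/4$ to the per-block gradient, and because the block gradients are orthogonal,
$$
  \|\nabla F(\hat\xb)\|_2^2 \;\geq\; \frac{C^2\alpha^{3/2}}{16\,n}\,\bigl|\{i:T_i<m\}\bigr|.
$$
To make the right-hand side at most $\epsilon^2$ one needs at least a constant fraction of blocks to satisfy $T_i\ge m$, hence $T\ge nm/2$. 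Balancing $m$ against the $\Delta$-budget from Proposition \ref{property_nc}.2 and the $(-\sigma,L)$-smoothness constraints produces, after routine arithmetic, the target $\Omega(\Delta\cdot n^{3/4}\sqrt{\sigma L}/\epsilon^2)$ lower bound.

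The second construction is essentially a single $\fnc$-chain embedded identically across the $n$ components, so that every query against any component reveals the same information; the Carmon--Duchi--Hinder--Sidford style deterministic argument applied in the full dimension gives $\Omega(L\Delta/\epsilon^2)$, from which a $\sqrt{n}$ factor emerges after matching the two smoothness constraints, yielding the $\sqrt{n}L\Delta/\epsilon^2$ alternative. The main technical obstacle is the randomization step: since $i_t$ is chosen adaptively by $\cA$, the zero-chain argument must be robust to an algorithm that concentrates queries on a small block of components. I plan to handle this by drawing $\{\Ub^{(i)}\}$ from the Haar measure on the Stiefel manifold and applying a rotation-invariance / symmetrization argument in the spirit of Woodworth--Srebro: conditioned on the observed transcript, any unqueried block remains isotropic in its orthogonal complement and therefore still supplies a full $\alpha^{3/4}$ worth of gradient norm, which closes the expected-value version of the lower bound required by the $\EE\|\nabla F(\hat\xb)\|_2 \le \epsilon$ criterion.
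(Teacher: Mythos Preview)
Your main construction is essentially the paper's: orthogonal blocks $\{\Ub^{(i)}\}$, the $\sqrt{n}$-amplification $\bar f_i(\xb) = \sqrt{n}\,\fnc(\Ub^{(i)}\xb;\alpha,m)$, and the zero-chain plus pigeonhole argument showing that fewer than $nm/2$ queries leave more than half the blocks with their last two coordinates still zero, hence a large gradient. The paper does not add your ``common quadratic'': Lemma~\ref{avsmooth} already yields $\bar F\in\cS^{(-\alpha\lg/\sqrt n,\,4+\alpha\lg)}$ directly from $\fnc\in\cS^{(-\alpha\lg,\,4+\alpha\lg)}$, and the final $(\lambda,\beta)$ rescaling via Lemma~\ref{scale_change} lands in $\cS^{(-\sigma,L)}$.

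There is, however, a real gap in your second construction. A single $\fnc$-chain embedded identically in every component means every $f_i$ equals the same function $g$, so $\{f_i\}\in\cV^{(L)}$ forces $g$ itself to be $L$-smooth; the Carmon--Duchi--Hinder--Sidford argument then gives only $\Omega(L\Delta/\epsilon^2)$ IFO calls, and no $\sqrt n$ can emerge from ``matching the two smoothness constraints'' because for identical components the average-smoothness and per-component smoothness constraints coincide. The paper instead obtains \emph{both} branches of the minimum from the \emph{same} orthogonal-block construction by choosing
\[
\alpha \;=\; \min\Bigl\{\tfrac{5\sigma\sqrt n}{\lg L},\ \tfrac{1}{\lg}\Bigr\}.
\]
When $\sigma\sqrt n\lesssim L$ the first branch is active and the query count $nT/2$ scales as $n^{3/4}\sqrt{\sigma L}\,\Delta/\epsilon^2$; when $\sigma\sqrt n\gtrsim L$ the cap $\alpha=1/\lg$ gives $T\propto L\Delta/(\sqrt n\,\epsilon^2)$ and hence $nT/2=\Theta(\sqrt n\,L\Delta/\epsilon^2)$. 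The $\sqrt n$ factor comes from the orthogonal-block structure itself (each IFO call advances only one of $n$ independent chains), not from a separate identical-component instance.

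Finally, the Haar-measure symmetrization you propose for the randomization step is unnecessary. Your own pigeonhole argument (``at least a constant fraction of blocks must satisfy $T_i\ge m$'') already holds for \emph{every} realization of the index sequence $\{i_t\}$, so $\|\nabla F(\xb^{(t)})\|_2>\epsilon$ deterministically whenever $t<nm/2$; the expectation criterion then follows trivially. The paper phrases this as an appeal to Yao's minimax, but no random distribution over instances is actually needed here.
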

\begin{remark}
Our lower bound \eqref{mainth_3} is tight for the following reasons. \eqref{mainth_3} becomes $\Omega (\Delta/\epsilon^2 \cdot n^{3/4}\sqrt{\sigma L})$ when $\sigma =O(L/\sqrt{n})$, and such IFO complexity has been achieved by {RepeatSVRG} up to a logarithm factor \citep{carmon2018accelerated, agarwal2017finding}. For the case $\sigma = \Omega(L/\sqrt{n})$, \eqref{mainth_3} becomes $\Omega(\Delta/\epsilon^2\cdot \sqrt{n}L)$, and such IFO complexity has been achieved by {SPIDER} \citep{fang2018spider} and {SNVRG} \citep{zhou2018stochastic} up to a logarithm factor.
\end{remark}

Next we show lower bounds under a slightly stronger assumption that each $f_i \in \cS^{(-\sigma, L)}$. Our result shows that with such a stronger assumption, the optimal dependency on $n$ will be smaller.
\begin{theorem}\label{linear_theorem_sep_nc}
For any linear-span randomized first-order algorithm $\cA$ and any $L,n, \Delta, \epsilon$ which satisfies that $\epsilon^2 \leq (\Delta Ln^{-1} \land \Delta\sigma)/10^3$, there exist a dimension $d = O(\Delta/\epsilon^2\cdot(\sqrt{n\sigma L} \land L))$ and functions $\{f_i\}_{i=1}^n:\RR^d \rightarrow \RR$ which satisfy that each $f_i \in \cS^{(-\sigma, L)}$ and $F(\xb^{(0)}) - \inf_{\xb \in \RR^d} F(\xb) \leq \Delta$. In order to find $\hat\xb \in \RR^d$ such that $\EE\|\nabla F(\hat\xb)\|_2 \leq \epsilon$, $\cA$ needs at least
\begin{align}
    \Omega\bigg(\frac{\Delta}{\epsilon^2}\big[\sqrt{n\sigma L}\land L\big]\bigg)\label{mainth_4}
\end{align}
IFO calls.
\end{theorem}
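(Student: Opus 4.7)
My plan is to adapt the construction used in the proof of Theorem \ref{linear_theorem_exp_nc} to the stronger per-component smoothness assumption; intuitively, the ``hard'' nonconvex chain must now be distributed across all $n$ components instead of being concentrated in a single one, and this trades the $n^{3/4}$ exponent for $\sqrt n$.

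For the first branch $\Omega(\Delta\sqrt{n\sigma L}/\epsilon^2)$ of \eqref{mainth_4}, fix parameters $c,\alpha,m$, set $d=n(m+1)$, and let $\{\Ub^{(i)}\}_{i=1}^n\in\cO(m+1,d,n)$ be pairwise row-orthogonal matrices. Define
\[
f_i(\xb)\;=\;c\,\fnc\!\bigl(\Ub^{(i)}\xb;\alpha,m\bigr),\qquad i=1,\dots,n.
\]
By Proposition \ref{property_nc}(1) and orthogonal invariance, $f_i\in\cS^{(-c\alpha\lg,c(4+\alpha\lg))}$, so picking $c\le L/8$ and $\alpha\le \sigma/(c\lg)$ enforces $f_i\in\cS^{(-\sigma,L)}$ individually. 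Mutual orthogonality of $\mathrm{row}(\Ub^{(i)})$ makes the objective split cleanly: Proposition \ref{property_nc}(2) gives $F(0)-\inf F\le c(\sqrt\alpha/2+10\alpha m)$, and $\|\nabla F(\xb)\|_2^2 = n^{-2}\sum_i\|\nabla f_i(\xb)\|_2^2$, so by Proposition \ref{property_nc}(3) whenever half of the chains still have $(\Ub^{(i)}\xb)_m=(\Ub^{(i)}\xb)_{m+1}=0$ we obtain $\|\nabla F(\xb)\|_2\ge c\alpha^{3/4}/(C\sqrt n)$ for a universal constant $C$.

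The IFO lower bound uses the zero-chain property of Proposition \ref{index-move}(2) together with a random-rotation adversary (pick $d\gg n(m+1)$ with the $\Ub^{(i)}$ uniformly random subject to the orthogonality constraint, to prevent shortcuts). An IFO call at $(\xb^{(t)},i_t)$ then extends the iterate's span within $\mathrm{row}(\Ub^{(i_t)})$ by at most one new row and leaves every other chain's span invariant. Writing $T_i$ for the number of queries to $f_i$, chain $i$ has advanced by at most $T_i$ coordinates; Markov's inequality yields $n/2$ indices with $T_i\le 2T/n$, and for these the last two coordinates remain zero unless $T\ge n(m-2)/4$. Combining with the gradient lower bound forces $T=\Omega(nm)$.

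The last step is to balance $c\alpha m\le C_1\Delta$, $c\alpha^{3/4}\ge C_2\epsilon\sqrt n$, $c\le L/8$, and $c\alpha\lg\le\sigma$ so as to maximize $T=\Omega(n\Delta/(c\alpha))$. Saturating $c=L/8$ and $c\alpha=\sigma/\lg$ gives $T=\Omega(n\Delta/\sigma)$, which matches the advertised bound at the edge $\epsilon^2\sim\sigma^{3/2}L^{1/2}/\sqrt n$; for smaller $\epsilon$ I would refine the construction so that each chain link is ``owned'' by only a $\sqrt n$-sized random subset of components in the spirit of \citet{woodworth2016tight}, which then recovers the full $\Omega(\Delta\sqrt{n\sigma L}/\epsilon^2)$ dependence. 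The second branch $\Omega(\Delta L/\epsilon^2)$ follows from the same template restricted to a single nontrivial component (the remaining ones being the zero function, vacuously $(-\sigma,L)$-smooth) and invoking the classical single-function nonconvex lower bound of \citet{carmon2017lower2}. The chief obstacle is carrying out the random-subset distribution so that the linear-span property cannot be exploited to share information across orthogonal chains; this is where the proof genuinely diverges from that of Theorem \ref{linear_theorem_exp_nc}, and where the tight $\sqrt n$ factor is extracted.
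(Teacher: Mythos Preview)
Your basic construction \(f_i(\xb)=c\,\fnc(\Ub^{(i)}\xb;\alpha,m)\) is the natural first attempt, but it is precisely the place where the proof genuinely diverges from Theorem~\ref{linear_theorem_exp_nc}, and the divergence is not the one you identify. With your construction, the entire nonconvex term \(\alpha\Gamma\) sits inside each component, so the per-component constraint \(f_i\in\cS^{(-\sigma,L)}\) forces \(c\alpha\lg\le\sigma\). As you discovered, this caps the attainable lower bound at \(\Omega(n\Delta/\sigma)\), which matches \(\Omega(\Delta\sqrt{n\sigma L}/\epsilon^2)\) only at the single edge \(\epsilon^2\asymp\sigma^{3/2}L^{1/2}/\sqrt n\). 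Your proposed fix --- reassigning chain links to \(\sqrt n\)-sized random subsets of components in the style of \citet{woodworth2016tight} --- is not worked out, and in fact is not what the paper does.

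The paper's device is much simpler and exploits the specific structure of \(\fnc\). Write \(\fnc=Q+\alpha\Gamma\), keep the convex chain \(Q\) block-local, but \emph{share} the nonconvex part across all components:
\[
\bar f_i(\xb)\;=\;Q(\Ub^{(i)}\xb;\sqrt\alpha,T+1,0)\;+\;\frac{\alpha}{n}\,\Gamma(\Ub\xb),
\]
where \(\Ub\) is the full stack of the \(\Ub^{(i)}\). Because each \(f_i\) now carries only a \(1/n\)-share of \(\Gamma\), the per-component curvature constraint becomes \(\alpha\lg/n\le\sigma\), i.e.\ \(\alpha\lesssim n\sigma/L\) after scaling --- an extra factor of \(n\) in \(\alpha\). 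Averaging over \(i\) restores \(\bar F(\xb)=\tfrac1n\sum_i\fnc(\Ub^{(i)}\xb;\alpha,T)\), so the gradient lower bound from Proposition~\ref{property_nc}(3) is unchanged. Crucially, the zero-chain property survives: \(\Gamma\) is coordinatewise separable with \(\gamma'(0)=0\), so the shared term \(\tfrac{\alpha}{n}\Gamma(\Ub\xb)\) never introduces a new coordinate, and only the local \(Q(\Ub^{(i)}\xb)\) advances chain \(i\) on a query to \(f_i\). With this one modification, the balancing you wrote down goes through directly with \(\alpha=\min\{1,\,\Theta(n\sigma/L)\}\) and yields \(nT/2=\Omega(\Delta L\sqrt\alpha/\epsilon^2)=\Omega(\Delta(\sqrt{n\sigma L}\land L)/\epsilon^2)\) in one stroke --- no random-subset construction and no separate treatment of the second branch are needed.
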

\begin{remark}
Our lower bound \eqref{mainth_4} is tight for the case $\sigma = O(L/n)$, where \eqref{mainth_4} becomes $\Omega(\Delta/\epsilon^2\cdot \sqrt{n\sigma l})$. Such IOF complexity has been achieved by {Natasha} \citep{allen2017natasha}, {RapGrad} \citep{lan2018accelerated} and {StagewiseKatyusha} \citep{yang2018does} up to a logarithm factor. Nevertheless, for the case $\sigma = \Omega(L/n)$, \eqref{mainth_4} becomes $\Omega(\Delta/\epsilon^2\cdot L)$, which does not match the best-known upper bound $O(\Delta/\epsilon^2 \cdot \sqrt{n}L)$ \citep{fang2018spider} by a factor of $\sqrt{n}$ on the dependency of $n$. We leave it as a future work to close this gap. 
\end{remark}

\subsection{Discussion on the Average Smoothness Assumption}

Careful readers may have already found that in our Theorems \ref{linear_theorem_exp_sc}, \ref{linear_theorem_exp_c} and \ref{linear_theorem_exp_nc}, we only assume that $\{f_i\}_{i=1}^n \in \cV^{(L)}$. In other words, the above lower bound results (except Theorem \ref{linear_theorem_sep_nc}) hold for $\{f_i\}_{i=1}^n$ that is average smooth. Nevertheless, most of the upper bound results achieved by existing finite-sum optimization algorithms (i.e., SDCA without Duality \citep{shalev2016sdca}, Natasha \citep{allen2017natasha}, KatyushaX \citep{allen2018katyushax}, RapGrad \citep{lan2018accelerated}, StagewiseKatyusha \citep{yang2018does} and RepeatSVRG \citep{agarwal2017finding, carmon2018accelerated}) are proved under the assumption that $f_i \in \cS^{(-L,L)}$ for each $i \in [n]$, which is stronger than assuming $\{f_i\}_{i=1}^n \in \cV^{(L)}$, which only appears in \citet{zhou2018stochastic} and \citet{fang2018spider}.
Therefore, it is important to verify that these upper bounds results still hold under the weaker assumption that $\{f_i\}_{i=1}^n$ that is average smooth.

To verify this, we need to rethink about the role that the assumption $f_i \in \cS^{(-L,L)}$ for each $i \in [n]$ plays in the convergence analyses for those algorithms. In detail, in the convergence analyses of  those nonconvex finite-sum optimization algorithms including {SDCA without Duality} \citep{shalev2016sdca}, {Natasha} \citep{allen2017natasha}, {KatyushaX} \citep{allen2018katyushax}, one needs the assumption that $f_i \in \cS^{(-L,L)}$ for each $i \in [n]$ in the following two scenarios:
First, it is used to show that $F \in \cS^{(-L,L)}$, which can be derived as follows: for any $\xb, \yb \in \RR^d$,
\begin{align}
    \|\nabla F(\xb) - \nabla F(\yb)\|_2^2 &\leq \EE_i\|\nabla f_i(\xb) - \nabla f_i(\yb)\|_2^2 \notag \\
    &\leq L^2\|\xb - \yb\|_2^2.\label{dis_1}
\end{align}
Second, it is used to upper bound the variance of the semi-stochastic gradient at each iteration, which is an unbiased estimator of the true gradient. More specifically, let $\vb$ be
\begin{align}
    \vb = \nabla f_i(\xb) - \nabla f_i(\hat\xb) + \nabla F(\hat\xb),\notag
\end{align}
where $\hat\xb$ is the global minimum of $F$ when $F$ is convex or any snapshot of $\xb$ when $F$ is nonconvex. Then we have
\begin{align}
    \EE_i\|\vb - \nabla F(\xb)\|_2^2
    &= 
    \EE_i\|\nabla f_i(\xb) - \nabla f_i(\hat\xb) + \nabla F(\hat\xb) - \nabla F(\xb)\|_2^2\notag \\
    & \leq 
    2 \Big[\EE_i\|\nabla f_i(\xb) - \nabla f_i(\hat\xb)\|_2^2 + \|\nabla F(\hat\xb) - \nabla F(\xb)\|_2^2\Big]\notag  \\
    &\leq 2 L^2 \|\xb - \hat\xb\|_2^2.\label{dis_0}
\end{align}

We can see that in both scenarios, the weaker assumption $\{f_i\}_{i=1}^n \in \cV^{(L)}$ is sufficient to make \eqref{dis_1} and \eqref{dis_0} hold. Thus, we make the following informal statement, which may be regarded as a slight improvement/modification in terms of assumptions over existing algorithms for nonconvex finite-sum optimization problems.
\begin{proposition}\label{prop:notaffect}
For existing nonconvex finite-sum optimization algorithms including SDCA without Duality \citep{shalev2016sdca}, Natasha \citep{allen2017natasha}, KatyushaX \citep{allen2018katyushax}, RapGrad \citep{lan2018accelerated}, StagewiseKatyusha \citep{yang2018does} and RepeatSVRG \citep{agarwal2017finding, carmon2018accelerated}, we can replace the smoothness assumption that $f_i \in \cS^{(-L,L)}$ with $\{f_i\}_{i=1}^n \in \cV^{(L)}$, without affecting their IFO complexities.
\end{proposition}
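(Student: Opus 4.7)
The plan is to verify the informal proposition by a careful audit of each of the six referenced convergence proofs, showing that every place where the hypothesis $f_i \in \cS^{(-L,L)}$ is invoked can in fact be replaced by the weaker hypothesis $\{f_i\}_{i=1}^n \in \cV^{(L)}$. The paper has already isolated the two canonical uses of per-component smoothness, namely (i) establishing that the average $F$ is $L$-smooth, as in \eqref{dis_1}, and (ii) bounding the second moment of the SVRG-style variance-reduced gradient estimator $\vb = \nabla f_i(\xb) - \nabla f_i(\hat\xb) + \nabla F(\hat\xb)$, as in \eqref{dis_0}. Since both of these inequalities only require $\EE_i \|\nabla f_i(\xb) - \nabla f_i(\yb)\|_2^2 \leq L^2\|\xb-\yb\|_2^2$, the task reduces to showing that no other use of per-component smoothness slips into these convergence analyses.

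First, I would separate the six algorithms into two groups: the variance-reduction-plus-acceleration methods (SDCA without Duality, KatyushaX, Natasha, RepeatSVRG) and the more recent stage-wise or proximal variants (RapGrad, StagewiseKatyusha). For the first group, I would walk through each proof and verify that after the standard epoch-based descent lemma, every invocation of $L$-smoothness fits into one of the two templates above: either a smoothness step applied to the outer objective $F$, which is covered by \eqref{dis_1}, or a variance bound on a mini-batch/SVRG estimator centered at a snapshot point, which is covered by \eqref{dis_0}. In particular, none of these algorithms applies smoothness to an individual $f_i$ along a trajectory determined by that same $f_i$; they only ever apply it to an average over a random index. For the second group, the proofs rely on proximal envelopes and bounded nonconvex perturbations; I would check that the proximal-point sub-problem analysis only uses $F$-level smoothness and a single variance bound of the form \eqref{dis_0}, both of which are preserved.

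The key technical step I would present explicitly is a short lemma: if $\{f_i\} \in \cV^{(L)}$, then for any random index $i$ and any $\xb, \hat\xb$,
\begin{equation*}
\EE_i \|\nabla f_i(\xb) - \nabla f_i(\hat\xb) - (\nabla F(\xb)-\nabla F(\hat\xb))\|_2^2 \leq L^2 \|\xb - \hat\xb\|_2^2,
\end{equation*}
which follows from $\EE \|X - \EE X\|_2^2 \leq \EE\|X\|_2^2$ applied to $X = \nabla f_i(\xb) - \nabla f_i(\hat\xb)$. This is the exact inequality that powers every variance-reduced epoch bound. Once this lemma is stated, the modification of each original proof is purely syntactic: wherever the authors wrote $\|\nabla f_i(\xb) - \nabla f_i(\hat\xb)\|_2^2 \leq L^2\|\xb-\hat\xb\|_2^2$ and then took an expectation over $i$, one simply points to the lemma directly; wherever they bounded $\|\nabla F(\xb)-\nabla F(\yb)\|_2 \leq L\|\xb-\yb\|_2$, one invokes \eqref{dis_1}.

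The main obstacle, and the part that requires the most care rather than genuine new ideas, is confirming that no hidden use of per-component smoothness creeps in through the acceleration/momentum analyses of KatyushaX and StagewiseKatyusha, for example in bounding $f_i(\yb) \leq f_i(\xb) + \la \nabla f_i(\xb), \yb-\xb\ra + (L/2)\|\xb-\yb\|_2^2$ for a single sampled $i$. I expect that every such pointwise quadratic upper bound can be replaced, after taking expectation over $i$, by the corresponding bound on $F$ (which is $L$-smooth by \eqref{dis_1}), because the algorithms only use such inequalities inside an expected descent lemma averaged over the sampled coordinate. Verifying this line by line for each of the six algorithms, and noting the (few) places where the substitution requires reorganizing the order of expectation and the quadratic bound, is what the final proof would consist of; since the proposition is stated as informal, I would present this verification as a case-by-case checklist rather than a single monolithic argument.
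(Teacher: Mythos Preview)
Your proposal is correct and follows exactly the paper's own reasoning: the paper's ``proof'' of this informal proposition is precisely the discussion preceding it, which identifies the two canonical uses \eqref{dis_1} and \eqref{dis_0} of per-component smoothness and observes that both follow from $\{f_i\}_{i=1}^n \in \cV^{(L)}$. Your plan to audit each algorithm line by line is more thorough than what the paper actually carries out---the paper simply asserts that these are the only two scenarios and leaves the per-algorithm verification implicit---but the underlying argument is identical.
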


\section{Proof of Main Theorems}\label{sec:main}

In this section, we provide the detailed proofs for the lower bounds presented in Section \ref{sec:mainresult}.
Due to space limit, we only provide the proofs for Theorems \ref{linear_theorem_exp_sc} and \ref{linear_theorem_exp_nc}, and defer the proofs for the other theorems in the supplementary material.

\subsection{Technical Lemmas}
Our proofs are based on the following three technical lemmas, whose proofs can be found in the supplementary material.

The first lemma provides the upper bound for the average smoothness parameter of finite-sum functions, when each component function is lower and upper smooth.
\begin{lemma}\label{avsmooth}
For any $g:\RR^{m}\rightarrow \RR$ and $g \in \cS^{(\xi, \zeta)}$ where $0 \leq |\xi| \leq \zeta$, suppose that $\{\Ub^{(i)}\}_{i=1}^n \in \cO(m,mn, n)$. Then for $\bar g_i:\RR^{mn}\rightarrow \RR$ where $\bar g_i(\xb): = \sqrt{n} g(\Ub^{(i)}\xb)$, we have that $\{\bar g_i\}_{i=1}^n \in \cV^{(\zeta)}$. For $\bar G(\xb) = \sum_{i=1}^n\bar g_i(\Ub^{(i)}\xb)/n$, we also have $\bar G \in \cS^{(\xi/\sqrt{n}, \zeta)}$.
\end{lemma}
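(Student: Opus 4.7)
The plan is to do a direct computation using the chain rule, relying on two properties of the matrices $\{\Ub^{(i)}\}$: each $\Ub^{(i)}\Ub^{(i)\top}=\Ib_m$, and the stacked matrix $\Ub=[\Ub^{(1)\top},\dots,\Ub^{(n)\top}]^\top\in \RR^{mn\times mn}$ is square orthogonal, hence $\sum_{i=1}^n \Ub^{(i)\top}\Ub^{(i)} = \Ib_{mn}$. This last identity is what turns a sum of rank-$m$ objects into a bound that is independent of $n$ in the right way.

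For the average-smoothness claim, first compute $\nabla \bar g_i(\xb)=\sqrt{n}\,\Ub^{(i)\top}\nabla g(\Ub^{(i)}\xb)$. Using $\Ub^{(i)}\Ub^{(i)\top}=\Ib_m$ inside the squared norm collapses the lifting, giving
\[
\|\nabla \bar g_i(\xb)-\nabla \bar g_i(\yb)\|_2^2 = n\,\|\nabla g(\Ub^{(i)}\xb)-\nabla g(\Ub^{(i)}\yb)\|_2^2 \leq n\,\zeta^2\,\|\Ub^{(i)}(\xb-\yb)\|_2^2,
\]
where the last step uses that $g\in \cS^{(\xi,\zeta)}$ with $|\xi|\leq \zeta$ implies a $\zeta$-Lipschitz gradient. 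Averaging over $i$ and invoking $\sum_i \Ub^{(i)\top}\Ub^{(i)}=\Ib_{mn}$ cancels the factor of $n$ and yields $\EE_i\|\nabla \bar g_i(\xb)-\nabla \bar g_i(\yb)\|_2^2 \leq \zeta^2\|\xb-\yb\|_2^2$, which is exactly $\{\bar g_i\}\in \cV^{(\zeta)}$.

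For the $(l,L)$-smoothness of $\bar G$, compute the Hessian $\nabla^2 \bar G(\xb) = \frac{1}{\sqrt{n}}\sum_i \Ub^{(i)\top}\nabla^2 g(\Ub^{(i)}\xb)\,\Ub^{(i)}$. The mutual-orthogonality of the $\Ub^{(i)}$'s means that this matrix is, in the orthonormal basis given by the rows of the stacked $\Ub$, block-diagonal with blocks $\frac{1}{\sqrt{n}}\nabla^2 g(\Ub^{(i)}\xb)$. Consequently the spectrum of $\nabla^2 \bar G(\xb)$ is contained in $[\xi/\sqrt{n},\zeta/\sqrt{n}]$. Since $\zeta/\sqrt{n}\leq \zeta$, the upper smoothness bound $\zeta$ holds a fortiori, giving $\bar G\in \cS^{(\xi/\sqrt{n},\zeta)}$. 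If one prefers a Hessian-free argument for the case where $g$ is only assumed differentiable, the same bounds can be derived by applying the smoothness inequalities directly to $g(\Ub^{(i)}\xb)-g(\Ub^{(i)}\yb)-\langle \nabla g(\Ub^{(i)}\yb),\Ub^{(i)}(\xb-\yb)\rangle$ term by term and then summing, using $\sum_i \|\Ub^{(i)}(\xb-\yb)\|_2^2 = \|\xb-\yb\|_2^2$ at the end.

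The only place where one must be slightly careful is the passage from ``each term is bounded by $\zeta$'' to ``the sum is bounded by $\zeta$ (not $n\zeta$)''; this is where the orthogonality condition $\Ub^{(i)}\Ub^{(j)\top}=0$ for $i\neq j$ does all the work, by ensuring that the $n$ rank-$m$ pieces live on orthogonal subspaces rather than overlapping. Once this observation is in hand, both claims are essentially one-line computations, so I do not anticipate a substantive obstacle.
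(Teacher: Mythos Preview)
Your proposal is correct and follows essentially the same approach as the paper: both compute $\nabla \bar g_i$ via the chain rule, use $\Ub^{(i)}\Ub^{(i)\top}=\Ib_m$ to drop the lifting inside the norm, and then invoke $\sum_i \Ub^{(i)\top}\Ub^{(i)}=\Ib_{mn}$ to collapse the sum and obtain the $\cV^{(\zeta)}$ bound. For the $(\xi/\sqrt n,\zeta)$-smoothness of $\bar G$, the paper gets the lower Hessian bound exactly as you do, but derives the upper bound $\zeta$ by Jensen's inequality from the already-established average smoothness ($\|\nabla \bar G(\xb)-\nabla \bar G(\yb)\|_2^2\leq \EE_i\|\nabla \bar g_i(\xb)-\nabla \bar g_i(\yb)\|_2^2$), whereas your block-diagonal spectrum argument actually yields the sharper upper bound $\zeta/\sqrt n$ before you relax it to $\zeta$; this is a harmless difference.
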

In the proof we need to do scale transformation to the given functions. The following lemma describes how problem dependent quantities change with respect to scale transformation.
\begin{lemma}\label{scale_change}
Let $\{\bar g_i\}_{i=1}^n, \bar g_i: \RR^d \rightarrow \RR$ be functions satisfying $\{\bar g_i\}_{i=1}^n \in \cV^{(L')}$, $\bar g_i \in \cS^{(\xi', \zeta')}$. We further define $\bar G = \sum_{i=1}^n \bar g_i/n$, and $\cZ^* = \argmin_{\zb \in \RR^d} \bar G(\zb)$. Suppose that $\bar G(0) - \inf_{\xb\in \RR^d}\bar G(\xb) \leq \Delta'$ and $\dist(0, \cZ^*) \leq B'$. For any $\lambda, \beta>0$, we define $\{g_i\}_{i=1}^n$ satisfying $g_i(\xb) = \lambda \bar g_i(\xb/\beta)$ and $G = \sum_{i=1}^n g_i/n$. Let $(\cZ')^* = \argmin_{\zb \in \RR^d} G(\zb)$. Then we have that $\{g_i\}_{i=1}^n \in \cV^{(\lambda/\beta^2\cdot L')}$, $g_i \in \cS^{(\lambda/\beta^2\cdot \xi', \lambda/\beta^2\cdot \zeta')}$, $G(0) - \inf_{\xb \in \RR^d}G(\xb) \leq \lambda\Delta'$ and $\dist(0, (\cZ')^*) \leq \beta B'$.
\end{lemma}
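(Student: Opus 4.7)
The plan is to verify all four claimed properties by direct computation, leveraging the chain rule: since $g_i(\xb) = \lambda \bar g_i(\xb/\beta)$, we have $\nabla g_i(\xb) = (\lambda/\beta)\,\nabla \bar g_i(\xb/\beta)$, and in general $G(\xb) = \lambda \bar G(\xb/\beta)$. Nothing subtle is happening; this is purely a bookkeeping lemma recording how the problem-dependent constants scale when we apply the affine change of variables $\xb \mapsto \xb/\beta$ and multiply by $\lambda$. I would prove the four conclusions in the order they appear in the statement.

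First, for the average-smoothness bound, I would compute
\begin{align*}
\EE_i \|\nabla g_i(\xb) - \nabla g_i(\yb)\|_2^2
&= \frac{\lambda^2}{\beta^2}\,\EE_i \|\nabla \bar g_i(\xb/\beta) - \nabla \bar g_i(\yb/\beta)\|_2^2 \\
&\leq \frac{\lambda^2}{\beta^2}\,(L')^2 \,\|\xb/\beta - \yb/\beta\|_2^2
= \left(\frac{\lambda}{\beta^2}L'\right)^2 \|\xb - \yb\|_2^2,
\end{align*}
where the inequality uses $\{\bar g_i\} \in \cV^{(L')}$. This yields $\{g_i\}_{i=1}^n \in \cV^{(\lambda L'/\beta^2)}$.

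Second, for the $(\lambda\xi'/\beta^2,\lambda\zeta'/\beta^2)$-smoothness of each $g_i$, I would apply the definition directly:
\begin{align*}
g_i(\xb) - g_i(\yb) - \la \nabla g_i(\yb), \xb - \yb\ra
= \lambda\bigl[\bar g_i(\xb/\beta) - \bar g_i(\yb/\beta) - \la \nabla \bar g_i(\yb/\beta), (\xb-\yb)/\beta\ra\bigr].
\end{align*}
Applying the $(\xi',\zeta')$-smoothness of $\bar g_i$ to the bracketed expression and noting $\|\xb/\beta - \yb/\beta\|_2^2 = \|\xb-\yb\|_2^2/\beta^2$ gives the two-sided bound $(\lambda\xi'/(2\beta^2))\|\xb-\yb\|_2^2 \leq \cdots \leq (\lambda\zeta'/(2\beta^2))\|\xb-\yb\|_2^2$, which is exactly $g_i \in \cS^{(\lambda\xi'/\beta^2,\lambda\zeta'/\beta^2)}$.

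Third, the suboptimality gap follows from observing that $G(\xb) = \lambda \bar G(\xb/\beta)$, so the change of variable $\yb = \xb/\beta$ is a bijection of $\RR^d$ and $\inf_{\xb} G(\xb) = \lambda \inf_{\yb} \bar G(\yb)$. Combined with $G(0) = \lambda \bar G(0)$, this gives $G(0) - \inf_\xb G(\xb) = \lambda[\bar G(0) - \inf_\yb \bar G(\yb)] \leq \lambda \Delta'$. Finally, for the distance, the same change of variable shows $(\cZ')^* = \beta\, \cZ^* := \{\beta \zb : \zb \in \cZ^*\}$, so
\begin{align*}
\dist(0, (\cZ')^*) = \inf_{\zb \in \cZ^*} \|\beta \zb\|_2 = \beta \inf_{\zb \in \cZ^*} \|\zb\|_2 = \beta\,\dist(0, \cZ^*) \leq \beta B'.
\end{align*}

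There is no real obstacle here; the only mild care needed is to keep track of the two separate parameters $\lambda$ (vertical scaling) and $\beta$ (horizontal scaling), and to recognize that first derivatives pick up one factor of $1/\beta$ while second-order (smoothness) quantities pick up $1/\beta^2$, which is why all smoothness parameters scale by $\lambda/\beta^2$ while the function-value gap scales only by $\lambda$ and the distance scales only by $\beta$.
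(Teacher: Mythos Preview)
Your proof is correct and follows essentially the same direct-computation approach as the paper. The only minor difference is that for the $(l,L)$-smoothness claim the paper invokes the Hessian characterization $\nabla^2 g_i(\xb) = (\lambda/\beta^2)\nabla^2 \bar g_i(\xb/\beta)$, whereas you work directly from the first-order inequality definition; your version is slightly cleaner since it does not implicitly assume twice differentiability.
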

We also need the following lemmas to guarantee an $\Omega(n)$ lower bound for finding an $\epsilon$-suboptimal solution when $F$ is strongly convex or convex.
\begin{lemma}\label{omega_n}
For any linear-span randomized first-order algorithm $\cA$ and any $L, \sigma, n, \Delta, \epsilon$ with $\epsilon <\Delta/4 $, there exist functions $\{f_i\}_{i=1}^n: \RR^n \rightarrow \RR$ and $F = \sum_{i=1}^n f_i/n$ which satisfy that $\{f_i\}_{i=1}^n \in \cV^{(L)}$, $F \in \cS^{(\sigma, L)}$ and $F(\xb^{(0)}) - \inf_{\xb \in \RR^n} F(\xb) \leq \Delta$. In order to find $\hat\xb \in \RR^n$ such that $\EE F(\hat\xb) - \inf_{\xb \in \RR^n}F(\xb) \leq \epsilon$, $\cA$ needs at least $\Omega(n)$ IFO calls.
\end{lemma}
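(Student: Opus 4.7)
The plan is to construct a fully separable quadratic $F$ on $\RR^n$ in which each component $f_i$ is the unique source of information about the $i$-th coordinate. Since a linear-span algorithm cannot acquire a new coordinate direction until the corresponding index is queried, every unvisited index contributes a fixed suboptimality of order $\Delta/n$, forcing $\Omega(n)$ IFO calls whenever $\epsilon < \Delta/4$.

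\textbf{Construction.} I would take $d = n$, $\xb^{(0)} = \mathbf{0}$, and for each $i \in [n]$ define
\[
  f_i(\xb) \;:=\; \frac{\sigma}{2}\|\xb\|_2^2 \;+\; \frac{n\alpha}{2}(\xb_i - b)^2,
\]
with parameters $\alpha, b > 0$ to be tuned. A direct calculation gives $\EE_i\|\nabla f_i(\xb) - \nabla f_i(\yb)\|_2^2 = (\sigma^2 + 2\sigma\alpha + n\alpha^2)\|\xb-\yb\|_2^2$, so I choose $\alpha$ as the positive root of $n\alpha^2 + 2\sigma\alpha + \sigma^2 = L^2$, which places $\{f_i\}_{i=1}^n$ exactly in $\cV^{(L)}$. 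The average $F$ is a rank-one quadratic with Hessian $(\sigma+\alpha)\Ib$; a short algebraic check (using only $n \geq 1$) confirms that this root satisfies $\sigma + \alpha \leq L$, so $F \in \cS^{(\sigma, L)}$. A standard computation gives $F(\mathbf{0}) - \inf F = n\alpha^2 b^2 / (2(\sigma+\alpha))$ and a per-coordinate gap between $\xb_j = 0$ and the optimum $\xb^\star_j$ equal to $\alpha^2 b^2 / (2(\sigma+\alpha))$, so I set $b^2 := 2\Delta(\sigma+\alpha)/(n\alpha^2)$ to calibrate these to $\Delta$ and $\Delta/n$ respectively.

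\textbf{Zero-chain span argument.} Since $\nabla f_j(\xb) = \sigma\xb + n\alpha(\xb_j - b)\mathbf{e}_j$, an easy induction on $t$ shows that every iterate $\xb^{(t)}$ and every sampled gradient through iteration $t-1$ lies in $\text{Lin}\{\mathbf{e}_{i_s}: s < t\}$: the term $\sigma\xb^{(t)}$ stays in the current span, while the second term can only contribute the single new basis vector $\mathbf{e}_{i_t}$. Because $F$ decomposes coordinate-wise, any $\hat\xb$ in this span after $T$ IFO calls has at least $n - T$ coordinates forced to $0$, so pointwise in the algorithm's random seed I obtain $F(\hat\xb) - \inf F \geq (n-T)\cdot\Delta/n$. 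Taking expectations, the requirement $\EE F(\hat\xb) - \inf F \leq \epsilon < \Delta/4$ immediately forces $T > 3n/4 = \Omega(n)$.

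\textbf{Main obstacle.} The only delicate point is the joint calibration of $\alpha$: it must be large enough that the per-coordinate gap equals $\Delta/n$ once $b$ is chosen, yet small enough that both $\{f_i\}_{i=1}^n \in \cV^{(L)}$ and $\sigma + \alpha \leq L$ hold simultaneously. The key algebraic observation is that the root of the average-smoothness equation automatically satisfies the $(\sigma, L)$-smoothness constraint on $F$ whenever $n \geq 1$, so a single $\alpha$ meets all three conditions; after that, the remaining work is the standard linear-span induction and separable-quadratic bookkeeping.
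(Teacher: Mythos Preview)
Your proof is correct and follows essentially the same approach as the paper: a coordinate-separable quadratic where each $f_i$ is the only component whose gradient introduces the direction $\eb^{(i)}$, combined with the linear-span induction to bound the support of any iterate by the number of IFO calls. The paper's construction $\bar f_i(\xb) = -\sqrt{n}\langle \xb,\eb^{(i)}\rangle + \tfrac{1}{2}\|\xb\|_2^2$ followed by a single rescaling is a slightly leaner variant of your shifted-quadratic $f_i(\xb) = \tfrac{\sigma}{2}\|\xb\|_2^2 + \tfrac{n\alpha}{2}(\xb_i-b)^2$; in particular the paper simply takes $\nabla^2 F = L\,\Ib$ and invokes $\cS^{(L,L)}\subset\cS^{(\sigma,L)}$ rather than solving for an $\alpha$ that hits the $\cV^{(L)}$ constraint with equality, but the mechanism and the resulting $\Omega(n)$ bound are the same.
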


\begin{lemma}\label{omega_n_convex}
For any linear-span randomized first-order algorithm $\cA$ and any $L, \sigma, n, \Delta, \epsilon$ with $\epsilon <\Delta/4 $, there exist functions $\{f_i\}_{i=1}^n: \RR^n \rightarrow \RR$ and $F = \sum_{i=1}^n f_i/n$ which satisfy that $\{f_i\}_{i=1}^n \in \cV^{(L)}$, $F \in \cS^{(0, L)}$ and $F(\xb^{(0)}) - \inf_{\xb \in \RR^n} F(\xb) \leq \Delta$. In order to find $\hat\xb \in \RR^n$ such that $\EE F(\hat\xb) - \inf_{\xb \in \RR^n}F(\xb) \leq \epsilon$, $\cA$ needs at least $\Omega(n)$ IFO calls.
\end{lemma}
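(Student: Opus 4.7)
The strategy follows the same outline as the proof of Lemma \ref{omega_n}, but without the strongly-convex regularizer. The hard instance is a separable quadratic in which each component $f_i$ depends only on the $i$-th coordinate, so that every IFO call to $f_i$ returns a gradient lying entirely in $\text{Lin}\{\mathbf{e}_i\}$; the linear-span property then forces any iterate produced after $t$ queries to live in the subspace spanned by the standard basis vectors corresponding to the indices queried so far, and an unqueried coordinate remains pinned at its initial value of zero.

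Concretely, I would set $\xb^{(0)} = 0$ and, for $i=1,\dots,n$, define
\begin{align*}
    f_i(\xb) \;=\; \frac{\sqrt{n}\,L}{2}\bigl(\xb_i - c\bigr)^2,
\end{align*}
for a constant $c > 0$ chosen below. Then $F(\xb) = \frac{L}{2\sqrt{n}}\|\xb - c\mathbf{1}\|_2^2$, so $\nabla^2 F = (L/\sqrt{n})\,\Ib \preceq L\,\Ib$ and hence $F\in \cS^{(0,L)}$. A direct computation gives $\EE_i\|\nabla f_i(\xb) - \nabla f_i(\yb)\|_2^2 = L^2\|\xb-\yb\|_2^2$, so $\{f_i\}_{i=1}^n \in \cV^{(L)}$. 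Choosing $c^2 = 2\Delta/(L\sqrt{n})$ produces $F(\xb^{(0)})-\inf F = L\sqrt{n}\,c^2/2 = \Delta$, matching the hypothesis. Equivalently, one can invoke Lemma \ref{avsmooth} with $m=1$, $g(x) = L(x-c)^2/2 \in \cS^{(0,L)}$, and $\Ub^{(i)} = \mathbf{e}_i^\top$ to obtain both the smoothness and average-smoothness bounds mechanically.

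For the lower bound, $\nabla f_i(\xb) = \sqrt{n}\,L(\xb_i - c)\mathbf{e}_i \in \text{Lin}\{\mathbf{e}_i\}$ for every $\xb$, so a one-line induction on the linear-span definition yields $\xb^{(t)} \in \text{Lin}\{\mathbf{e}_{i_0},\dots,\mathbf{e}_{i_{t-1}}\}$ for every realization of the algorithm's internal randomness. If fewer than $n/2$ IFO calls are made, the set of queried indices has cardinality less than $n/2$, so at least $n/2$ coordinates of $\hat\xb$ are zero, and therefore
\begin{align*}
    F(\hat\xb) - \inf F \;\geq\; \frac{L}{2\sqrt{n}}\cdot\frac{n}{2}\cdot c^2 \;=\; \frac{\Delta}{2} \;>\; \epsilon,
\end{align*}
where the last step uses the assumption $\epsilon < \Delta/4$. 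Because this bound holds uniformly across realizations of the algorithm's randomness, it survives taking the expectation and contradicts $\EE F(\hat\xb) - \inf F \leq \epsilon$. Consequently at least $n/2 = \Omega(n)$ IFO calls are required.

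The one delicate point is the $\sqrt{n}$ scaling of $f_i$: the naive choice $f_i(\xb) = \frac{L}{2}(\xb_i - c)^2$ yields a family that is only $(L/\sqrt{n})$-average smooth, a strictly stronger condition than $\cV^{(L)}$ which would make the construction correspond to a different parameter regime and artificially weaken the statement. The $\sqrt{n}$ factor is exactly what is needed to saturate $\cV^{(L)}$ while keeping $F$ within $\cS^{(0,L)}$; this scale bookkeeping is handled abstractly by Lemma \ref{avsmooth}, which I expect to cite directly rather than re-deriving the scaling by hand.
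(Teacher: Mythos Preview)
Your proof is correct and takes essentially the same approach as the paper. The paper's hard instance differs only cosmetically—it sets $\bar f_i(\xb) = -\sqrt{n}\la \xb, \eb^{(i)}\ra + \tfrac{1}{2}\|\xb\|_2^2$ (a shared quadratic plus a component-specific linear term) and then rescales via Lemma~\ref{scale_change}—but the zero-chain reasoning and the resulting $\Omega(n)$ counting argument are identical to yours.
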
 
We now begin our proof. Without loss of generality, we assume that $\xb^{(0)} = \zero$, otherwise we can replace function $f(\xb)$ with $\hat f(\xb) = f(\xb - \xb^{(0)})$.
\subsection{Proofs for: $F$ is Convex}

\begin{proof}[Proof of Theorem \ref{linear_theorem_exp_sc}]
Let $\{\Ub^{(i)}\}_{i=1}^n \in \cO(T,Tn, n)$. We choose $\bar f_i(\xb): \RR^{Tn}\rightarrow \RR$ as follows:
\begin{align}
    \bar f_i(\xb) &:= \sqrt{n}\fsc(\Ub^{(i)}\xb; \alpha,T), \notag \\
    \bar F(\xb) &:= \frac{1}{n}\sum_{i=1}^n \bar f_i(\xb).\notag
\end{align}
First, we claim that $\{\bar f_i(\xb)\}_{i=1}^n \in \cV^{(1)}$ and $ \bar F \in \cS^{(\alpha/\sqrt{n}, 1)}$ due to Lemma \ref{avsmooth} where $\fsc \in \cS^{(\alpha,1)}$, $\alpha \leq 1$. 
Next, we claim $\bar F(0) - \inf_\xb \bar F(\xb) \leq 1/\sqrt{n}\sum_{i=1}^n [\fsc(0; \alpha,T) - \inf_\xb\fsc(\Ub^{(i)}\xb; \alpha,T)] \leq q^2/\sqrt{n}(1-q^2)$, 
because
\begin{align}
    \bar F(0) - \inf_\xb \bar F(\xb) 
    &= \frac{\sqrt{n}}{n}\sum_{i=1}^n\fsc(0;\alpha,T) - \inf_\xb\frac{\sqrt{n}}{n}\sum_{i=1}^n\fsc(\Ub^{(i)}\xb;\alpha,T)\notag \\
    & = \frac{1}{\sqrt{n}}\sum_{i=1}^n [\fsc(0;\alpha,T) - \inf_\xb \fsc(\xb;\alpha,T)]\notag \\
    & \leq \frac{q^2}{\sqrt{n}(1-q^2)},\notag
\end{align}
where the second equality holds due to the fact that $\inf_\xb\sum_{i=1}^n \fsc(\Ub^{(i)}\xb;\alpha,T) = \sum_{i=1}^n \inf_\xb \fsc(\xb;\alpha,T)$.
Finally, 
let $\yb^{(i)} = \Ub^{(i)}\xb$. If there exists $\cI \subset [n], |\cI| > n/2$ and for each $i \in \cI$, $\yb^{(i)}_T = 0$. Then, by Proposition \ref{property_sc}, for each $i \in \cI$, we have $\fsc(\yb^{(i)}, \alpha, T) - \inf_\zb\fsc(\zb, \alpha, T) \geq \alpha q^{2T+2}/2$, which implies
\begin{align}
     \bar F(\xb) - \inf_\zb \bar F(\zb) 
     &\geq \frac{1}{\sqrt{n}}\sum_{i\in \cI} [\fsc(\yb^{(i)}, \alpha, T) - \inf_\zb\fsc(\zb, \alpha, T)]\notag \\
     &\geq \alpha\sqrt{n}q^{2T+2}/2.\label{temp_0}
\end{align}
With the above properties, we can choose $f_i(\xb) = \lambda\bar f_i(\xb/\beta)$ in the following proof. We first consider any fixed index sequence $\{i_t\}$. In the sequel, we consider two cases: (1) $\sqrt{n}\sigma/L \leq 1/4$; and (2) $\sqrt{n}\sigma/L > 1/4$.  

\noindent\textbf{Case (1):}  $\sqrt{n}\sigma/L \leq 1/4$, we set $\alpha, \lambda, \beta, T$ as follows
\begin{align}
    \alpha &= \frac{\sqrt{n}\sigma}{L}\notag\\
    \lambda &=\frac{4\sqrt{n\alpha}\Delta}{(1-\sqrt{\alpha})^2}\notag\\
    \beta &= \sqrt{\lambda/L}\notag\\
    T& = \sqrt{\frac{L}{\sqrt{n}\sigma}}\cdot \log\bigg[ \bigg(\frac{\sigma}{L}\bigg)^{3/2}\frac{8n^{7/4}\Delta}{\epsilon}\bigg].\notag 
\end{align}
Then by Lemma \ref{scale_change}, we have that $\{f_i\}_{i=1}^n \in \cV^{(L)}$ , $F \in \cS^{(\sigma, L)}$, $F(\zero) - \inf_\zb F(\zb) \leq \Delta$ due to $\sqrt{n}\sigma/L \leq 1/4$. By Proposition \ref{index-move}, we know that for any algorithm output $\xb^{(t)}$ where $t$ is less than 
\begin{align}
    \frac{nT}{2} = n^{3/4}\sqrt{\frac{L}{\sigma}}\log\bigg[ \bigg(\frac{\sigma}{L}\bigg)^{3/2}\frac{8n^{7/4}\Delta}{\epsilon}\bigg],\label{temp2}
\end{align}
there exists $\cI \subset [n], |\cI| > n - nT/(2T) = n/2$ and for each $i \in \cI$, $\yb^{(i)}_T = 0$, where $\yb^{(i)} = \Ub^{(i)}\xb^{(t)}$. Thus, $\xb^{(t)}$ satisfies 
\begin{align}
    F(\xb^{(t)}) - \inf_{\zb}F(\zb) \geq \lambda\alpha\sqrt{n}q^{2T+2}/2 \geq \epsilon,\notag
\end{align}
where the first inequality holds due to \eqref{temp_0}. Then, applying Yao's minimax theorem \citep{yao1977probabilistic}, we have that for any randomized index sequence $\{i_t\}$, we have the lower bound \eqref{temp2}.
\noindent\textbf{Case (2):} $\sqrt{n}\sigma/L > 1/4$, by Lemma \ref{omega_n} we know that there exists an $\Omega(n)$ lower bound. 

By combining Cases (1) and (2), we have the lower bound \eqref{mainth_1}.
\end{proof}

\begin{proof}[Proof of Theorem \ref{linear_theorem_exp_c}]
Let $\{\Ub^{(i)}\}_{i=1}^n \in \cO(2T-1,(2T-1)n, n)$. We choose $\bar f_i(\xb): \RR^{Tn}\rightarrow \RR$ as follows:
\begin{align}
    \bar f_i(\xb) &:= \sqrt{n}\fc(\Ub^{(i)}\xb; \alpha,T), \notag \\
    \bar F(\xb) &:= \frac{1}{n}\sum_{i=1}^n \bar f_i(\xb).\notag
\end{align}
We have the following properties. First, we claim that $\{\bar f_i(\xb)\} \in \cV^{(1)}$ because of Lemma \ref{avsmooth} where $\fc \in \cS^{(0,1)} \subset \cS^{(-1,1)}$. 
Next, suppose that $\bar\cX^* = \argmin_\zb \bar F(\zb)$, then by definition, we have that for any $\bar\xb^* \in \bar \cX^*$, $\Ub^{(i)}\bar\xb^* \in (\cX^*)^{(i)}$, where $(\cX^*)^{(i)} = \argmin_\zb \fc(\zb; \alpha, T)$. Thus, we have 
\begin{align}
    \dist^2(0, \bar\cX^*) &= \inf_{\bar\xb^* \in \bar\cX^*}\|0 - \bar\xb^*\|_2^2 = \inf_{\bar\xb^* \in \bar\cX^*}\sum_{i=1}^n\|\Ub^{(i)}\bar\xb^*\|_2^2 \leq \frac{2nT}{3} \leq nT.\notag
\end{align}
Finally, 
let $\yb^{(i)} = \Ub^{(i)}\xb \in \RR^T$. If there exists $\cI \subset [n], |\cI| > n/2$ and for each $i \in \cI$, $\yb^{(i)}_T =... = \yb^{(i)}_{2T-1}=0$, then by Proposition \ref{property_c}, we have
\begin{align}
     &\bar F(\xb) - \inf_\zb \bar F(\zb) \notag \\
     &\geq \frac{1}{\sqrt{n}}\sum_{i\in \cI} [\fsc(\yb^{(i)}, \alpha, T) - \inf_\zb\fsc(\zb, \alpha, T)] \notag \\
     &\geq \sqrt{n}/(16T).\label{temp456}
\end{align}
With above properties, we set the final functions as $f_i(\xb) = \lambda\bar f_i(\xb/\beta)$. We first consider any fixed index sequence $\{i_t\}$. For the case $\epsilon \leq LB^2/(16\sqrt{n})$, we set $\lambda, \beta, T$ as
\begin{align}
    \lambda =\frac{B\sqrt{16\epsilon L}}{n^{3/4}},
    \beta = \sqrt{\lambda/L},
    T = \frac{B\sqrt{L}}{4n^{1/4}\epsilon^{1/2}},\notag 
\end{align}
Since Then by Lemma \ref{scale_change}, we have that $f_i\in \cV^{(L)}$, $F \in \cS^{(0,L)}$, $F(0) - \inf_\zb F(\zb) \leq \Delta$. By Proposition \ref{index-move}, we know that for any algorithm output $\xb^{(t)}$ where $t$ is less than 
\begin{align}
    \frac{nT}{2} = 8n^{3/4}B\sqrt{\frac{L}{\epsilon}}\label{temp5},
\end{align}
there exists $\cI \subset [n], |\cI| > n - nT/(2T) = n/2$ and for each $i \in \cI$, $\yb^{(i)}_T =... = \yb^{(i)}_{2T-1}=0$, where $\yb^{(i)} = \Ub^{(i)}\xb^{(t)}$. Thus, $\xb^{(t)}$ satisfies that
\begin{align}
    \bar F(\xb^{(t)}) - \inf_\zb \bar F(\zb) \geq \lambda \sqrt{n}/(16T) \geq \epsilon,\notag
\end{align}
where the first inequality holds due to \eqref{temp456}. 
Then, applying Yao's minimax theorem, we have that for any randomized index sequence $\{i_t\}$, we have the lower bound \eqref{temp5}. For the case $LB^2/4 \geq \epsilon \geq LB^2/(16\sqrt{n})$, by Lemma \ref{omega_n_convex} we know that there exists an $\Omega(n)$ lower bound. Thus, with all above statements, we have the lower bound \eqref{mainth_2}.
\end{proof}

\subsection{Proofs for: $F$ is Nonconvex}

\begin{proof}[Proof of Theorem \ref{linear_theorem_exp_nc}]
Let $\{\Ub^{(i)}\}_{i=1}^n \in \cO(T+1,(T+1)n, n)$. We choose $\bar f_i(\xb): \RR^{Tn}\rightarrow \RR$ as follows:
\begin{align}
    \bar f_i(\xb) &:= \sqrt{n}\fnc(\Ub^{(i)}\xb; \alpha,T),\notag \\
    \bar F(\xb) &:= \frac{1}{n}\sum_{i=1}^n \bar f_i(\xb).
\end{align}
 We have the following properties. First, we claim that $\{\bar f_i(\xb)\}_{i=1}^n \in \cV^{(4+\alpha\lg)}$ and $\bar F(\xb) \in \cS^{(-\alpha\lg/\sqrt{n}, 4+\alpha\lg)}$ by Lemma \ref{avsmooth} where $\fnc \in \cS^{(-\alpha\lg, 4+\alpha\lg)}$ and $\alpha\lg<4+\alpha\lg$.
Next, we have 
\begin{align}
    \bar F(0) - \inf_\xb \bar F(\xb)
    &\leq 1/\sqrt{n}\sum_{i=1}^n [\fsc(0; \alpha,T) - \inf_\xb\fsc(\Ub^{(i)}\xb; \alpha,T)] \notag \\
    &\leq \sqrt{n}(\sqrt{\alpha} + 10\alpha T).
\end{align}
Finally, 
let $\yb^{(i)} = \Ub^{(i)}\xb$. If there exists $\cI, |\cI| > n/2$ and for each $i \in \cI$, $\yb^{(i)}_T = \yb^{(i)}_{T+1} = 0$, then by Proposition \ref{property_nc}, we have 
\begin{align}
     \|\nabla \bar F(\xb)\|_2^2 &\geq \frac{1}{n}\sum_{i\in \cI} \|(\Ub^{(i)})^\top\nabla[\fnc(\Ub^{(i)}\xb;\alpha, T)]\|_2^2 \notag \\
     &\geq \frac{1}{n}\frac{n}{2}(\alpha^{3/4}/4)^2 \notag \\
     &= \alpha^{3/2}/32.\label{temp111}
\end{align}
With above properties, we choose $f_i(\xb) = \lambda\bar f_i(\xb/\beta)$ in the following proof. We first consider any fixed index sequence $\{i_t\}$. We set $\alpha, \lambda, \beta, T$ as
\begin{align*}
    \alpha &= \min\bigg\{\frac{5\sigma \sqrt{n}}{\lg L}, \frac{1}{\lg}\bigg\}\\
    \lambda &= \frac{5\epsilon^2}{L\alpha^{3/2}}\\
    \beta &= \sqrt{5\lambda/L}\\
    T&  = \frac{L\Delta}{55\sqrt{n}\epsilon^2}\sqrt{\min\bigg\{\frac{5\sigma \sqrt{n}}{\lg L}, \frac{1}{\lg}\bigg\}},
\end{align*}
Then by Lemma \ref{scale_change}, we have that $\{f_i\}_{i=1}^n \in \cV^{(L)}$, $F \in \cS^{(\sigma, L)}$, $F(0) - \inf_\zb F(\zb) \leq \Delta$ with the assumption that $\epsilon^2 \leq L\alpha\Delta/(55\sqrt{n})$. By Proposition \ref{index-move}, we know that for any algorithm output $\xb^{(t)}$ where $t$ is less than
\begin{align}
    \frac{nT}{2}  &= \frac{L\sqrt{n}\Delta}{110\epsilon^2}\sqrt{\min\bigg\{\frac{5\sigma \sqrt{n}}{\lg L}, \frac{1}{\lg}\bigg\}} ,\label{temp112}
\end{align}
there exists $\cI \subset [n], |\cI| > n - nT/(2T) = n/2$ and for each $i$, $\yb^{(i)}_T = \yb^{(i)}_{T+1} = 0$ where $\yb^{(i)} = \Ub^{(i)}\xb^{(t)}$. Thus, by \eqref{temp111}, $\xb^{(t)}$ satisfies 
\begin{align}
    \|\nabla F(\xb^{(t)})\|_2 \geq \lambda/\beta\cdot \sqrt{\alpha^{3/2}/32} \geq \epsilon.\notag
\end{align}
Then, applying Yao's minimax theorem, we have that for any randomized index sequence $\{i_t\}$, we have the lower bound \eqref{temp112}, which implies \eqref{mainth_3}.
\end{proof}

\begin{proof}[Proof of Theorem \ref{linear_theorem_sep_nc}]
Let $\{\Ub^{(i)}\}_{i=1}^n \in \cO(T+1,(T+1)n, n)$. We choose $\bar f_i(\xb): \RR^{(T+1)n}\rightarrow \RR$ as follows:
\begin{align}
    \bar f_i(\xb) :&= Q(\Ub^{(i)}\xb; \sqrt{\alpha}, T+1, 0) + \frac{\alpha}{n} \Gamma (\Ub\xb),\notag \\
    \bar F(\xb) :&= \frac{1}{n}\sum_{i=1}^n \bar f_i(\xb).\notag
\end{align}
 We have the following properties. First, we claim that each $\bar f_i \in \cS^{(-\alpha\lg/n, 4+\alpha\lg/n)}$ because $Q \in \cS^{(0,4)}$ and $\Gamma \in \cS^{(-\lg, \lg)}$. Next, note that
 \begin{align}
     \bar F(\xb) &= \frac{1}{n}\sum_{i=1}^n \bar f_i(\xb) \notag \\
     &=  \frac{1}{n}\sum_{i=1}^n[Q(\Ub^{(i)}\xb; \sqrt{\alpha}, T+1, 0) + \alpha \Gamma(\Ub^{(i)}\xb)] \notag \\
     &= \frac{1}{n}\sum_{i=1}^n\fnc(\Ub^{(i)}\xb; \sqrt{\alpha}, T+1).\notag
 \end{align}
Then we have 
\begin{align}
    &\bar F(0) - \inf_\xb \bar F(\xb) \notag \\
    &= \frac{1}{n}\sum_{i=1}^n\fnc(0;\sqrt{\alpha},T+1)  - \inf_\xb\frac{1}{n}\sum_{i=1}^n\fnc(\Ub^{(i)}\xb;\sqrt{\alpha},T+1)\notag \\
    & = \frac{1}{n}\sum_{i=1}^n [\fnc(0;\sqrt{\alpha},T+1) - \inf_\xb \fnc(\xb;\sqrt{\alpha},T+1)]\notag \\
    & \leq \sqrt{\alpha} + 10\alpha T,\notag
\end{align}
where the second equality holds due to the fact that $\inf_\xb\sum_{i=1}^n \fnc(\Ub^{(i)}\xb;\alpha,T) = \sum_{i=1}^n \inf_\xb \fnc(\xb;\alpha,T)$.
Finally, 
let $\yb^{(i)} = \Ub^{(i)}\xb$. If there exists $\cI, |\cI| > n/2$ and for each $i \in \cI$, $\yb^{(i)}_T = \yb^{(i)}_{T+1} = 0$, then by Proposition \ref{property_nc}, we have
\begin{align}
     \|\nabla \bar F(\xb)\|_2^2 &\geq \frac{1}{n^2}\sum_{i\in \cI} \|\Ub^{(i)}\nabla[\fnc(\Ub^{(i)}\xb;\alpha, T)]\|_2^2 \notag \\
     &\geq \frac{1}{n^2}\frac{n}{2}(\alpha^{3/4}/4)^2 \notag \\
     &= \alpha^{3/2}/(32n).\label{temp500}
\end{align}
With above properties, we set the final functions $f_i(\xb) = \lambda\bar f_i(\xb/\beta)$. We first consider any fixed index sequence $\{i_t\}$. We set $\alpha, \lambda, \beta, T$ as
\begin{align*}
    \alpha &= \min\bigg\{1,\frac{5n\sigma }{\lg L}\bigg\}\\
    \lambda &= \frac{160n\epsilon^2}{L\alpha^{3/2}}\\
    \beta &= \sqrt{5\lambda/L}\\
    T&  = \frac{\Delta L}{1760n\epsilon^2}\sqrt{\min\bigg\{1,\frac{5n\sigma }{\lg L}\bigg\}},
\end{align*}
Then by Lemma \ref{scale_change}, we have that $f_i \in \cS^{(-\sigma, L)}$, $F(0) - \inf_\zb F(\zb) \leq \Delta$ with the assumption that $\epsilon^2 \leq \Delta L \alpha/(1760n)$. By Proposition \ref{index-move}, we know that for any algorithm output $\xb^{t}$ where $t$ is less than
\begin{align}
    \frac{nT}{2} = \frac{\Delta L}{3520\epsilon^2}\sqrt{\min\bigg\{1,\frac{5n\sigma }{\lg L}\bigg\}},\label{501}
\end{align}
there exists $\cI\subset [n], |\cI| > n - nT/(2T) = n/2$ and for each $i$, $\yb^{(i)}_T = \yb^{(i)}_{T+1} = 0$ where $\yb^{(i)} = \Ub^{(i)}\xb^{(t)}$. Thus, by \eqref{temp500}, $\xb^{(t)}$ satisfies that
\begin{align}
    \|\nabla F(\xb^{(t)})\|_2 \geq \lambda/\beta\cdot \sqrt{\alpha^{3/2}/(32n)} \geq \epsilon. \notag
\end{align}
Applying Yao's minimax theorem, we have that for any randomized index sequence $\{i_t\}$, we have the lower bound \eqref{501}, which implies \eqref{mainth_4}.

\end{proof}

\section{Conclusions and Future Work}
In this paper we proved the lower bounds of IFO complexity for linear-span randomized first-order algorithms to find $\epsilon$-suboptimal points or $\epsilon$-approximate stationary points for smooth nonconvex finite-sum optimization, where the objective function is the average of $n$ nonconvex functions. 
We would like to consider more general setting, such as $F$ is of $(\sigma, L)$-smoothness while each $f_i$ is $(l, L)$-smoothness. We are also interested in proving lower bound results for high-order finite-sum optimization problems \citep{arjevani2017oracle, agarwal2017lower}.

\appendix

\section{Proofs of Technical Lemmas}
\subsection{Proof of Lemma \ref{avsmooth}}
\begin{proof}[Proof of Lemma \ref{avsmooth}]
For any $\xb, \yb \in \RR^{mn}$, we have that
\begin{align}
     \EE_i\|\nabla \bar g_i (\xb) - \nabla \bar g_i (\yb)\|_2^2 
    &= \frac{1}{n}\sum_{i=1}^n\|\nabla[   \sqrt{n}g(\Ub^{(i)}\xb)]  - \nabla [\sqrt{n}g(\Ub^{(i)}\yb)\|_2^2]\notag \\
    &= \sum_{i=1}^n\|[\Ub^{(i)}]^\top\nabla  g(\Ub^{(i)}\xb)- [\Ub^{(i)}]^\top\nabla g(\Ub^{(i)}\yb)\|_2^2\notag \\
    & = \sum_{i=1}^n\|\nabla  g(\Ub^{(i)}\xb)- \nabla g(\Ub^{(i)}\yb)\|_2^2\notag \\
    & \leq \beta^2\sum_{i=1}^n \|\Ub^{(i)}\xb - \Ub^{(i)}\yb\|_2^2\notag \\
    & = \beta^2\|\xb - \yb\|_2^2,\notag
\end{align}
where the third and last equality holds due to the fact that $\Ub^{(i)}[\Ub^{(i)}]^\top = \Ib$ and $\Ub^{(i)}[\Ub^{(j)}]^\top = \bm{0}$ for each $i \neq j$, and the inequality holds due to the fact that $g \in \cS^{(-\zeta, \zeta)}$. Thus, we have $\{\bar g_i\}_{i=1}^n \in \cV^{(\zeta)}$. To prove $\bar G \in \cS^{(\xi/\sqrt{n}, \zeta)}$, we have
\begin{align}
    \nabla^2 \bar G(\xb) = \frac{1}{\sqrt{n}}\sum_{i=1}^n \Ub^{(i)}(\Ub^{(i)})^\top\nabla^2 g(\Ub^{(i)}\xb) \succeq \frac{\xi}{\sqrt{n}}\Ib,\notag
\end{align}
where the inequality holds due to the assumption that $g \in \cS^{(\xi, \beta)}$. With this fact and $\|\nabla \bar G(\xb) - \nabla \bar G(\yb)\|_2^2 \leq \EE_i\|\nabla \bar g_i(\xb) - \nabla \bar g_i(\yb)\|_2^2 \leq \beta^2 \|\xb - \yb\|_2^2$ which implies that $\nabla^2 \bar G(\xb) \preceq \beta\Ib$, we conclude that $\bar G \in \cS^{(\xi/\sqrt{n}, \beta)}$.
\end{proof}

\subsection{Proof of Lemma \ref{scale_change}}
\begin{proof}[Proof of Lemma \ref{scale_change}]
First we have $\{g_i\}_{i=1}^n \in \cV^{(\lambda/\beta^2\cdot L')}$ because for any $\xb, \yb \in \RR^d$,
\begin{align}
    \EE_i\|\nabla g_i(\xb) - \nabla g_i(\yb)\|_2^2 &= \lambda^2\EE_i\|\nabla \bar g_i(\xb/\beta)/\beta - \nabla \bar g_i(\yb/\beta)/\beta\|_2^2\notag \\
    & \leq \lambda^2/\beta^2 (L')^2 \|\xb/\beta -\yb/\beta\|_2^2\notag \\
    & = (\lambda/\beta^2\cdot L')^2 \|\xb - \yb\|_2^2.\notag
\end{align}
Next we have $g_i \in \cS^{(\lambda/\beta^2\cdot \xi', \lambda/\beta^2\cdot \zeta')}$ because $\nabla^2 g_i(\xb) = \lambda/\beta^2\nabla^2 \bar g_i(\xb /\beta)$ and for any $\xb \in \RR^d$,
\begin{align}
    \lambda/\beta^2\cdot \xi'\Ib \preceq \lambda/\beta^2\nabla^2 \bar g_i(\xb /\beta) \preceq \lambda/\beta^2\cdot \zeta'\Ib.\notag 
\end{align}
Next we have $G(0) - \inf_{\xb }G(\xb) \leq \lambda\Delta'$ because 
\begin{align}
    G(0) - \inf_{\xb} G(\xb) = \lambda \bar G(0) - \lambda \inf_{\xb} G(\xb) \leq \lambda \Delta'.\notag
\end{align}
Finally we have $\dist(0, (\Zb')^*) \leq \beta B'$ because $(\Zb')^* = \beta\cdot \Zb^*$. 
\end{proof}

\subsection{Proof of Lemma \ref{omega_n}}
\begin{proof}[Proof of Lemma \ref{omega_n}]
Suppose the initial point $\xb^{(0)} = \zero$. Consider the following function $\{\bar f_i\}_{i=1}^n$, $\bar f_i:\RR^n \rightarrow \RR$, where
\begin{align}
    \bar f_i(\xb) &:= -\sqrt{n}\la\xb, \eb^{(i)}\ra + \frac{\|\xb\|_2^2}{2}, \notag \\
    \bar F(\xb)&: = \frac{1}{n}\sum_{i=1}^n \bar f_i(\xb),\notag
\end{align}
$\eb^{(i)}$ is the $i$-th coordinate vector. We have that $\{\bar f_i\}_{i=1}^n\in \cV^{(1)}$ and the global minimizer of $\bar F$ is 
\begin{align}
    \xb^* = \frac{1}{\sqrt{n}}\sum_{i=1}^n \eb^{(i)}.\notag
\end{align}
Thus we have $\dist(\zero, \xb^*) = 1$ and $\bar F(0) - \inf_{\xb} \bar F(\xb) = 1/2$. Moreover, if point $\xb$ satisfies that $|\text{supp}\{\xb\}| \leq n/2$, then
\begin{align}
    \bar F(\xb) &= \frac{\|\xb\|_2^2}{2} - \frac{1}{\sqrt{n}}\sum_{i=1}^n \la\xb,\eb^{(i)}\ra=\frac{\|\xb\|_2^2}{2} - \frac{1}{\sqrt{n}}\sum_{i \in \text{supp}\{\xb\}} \la\xb,\eb^{(i)}\ra \geq -1/4,\notag
\end{align}
which implies 
\begin{align}
    \bar F(\xb) - \inf_{\xb} F(\xb) \geq 1/4.\label{omega_n_1}
\end{align}
Next we choose $f_i = \lambda \bar f_i(\xb/\beta)$, where $\lambda = 2\Delta, \beta =\sqrt{2\Delta/L} $, then we can check that $\{f_i\}_{i=1}^n \in \cV^{(L)}$, $F(0)- \inf_{\xb}F(\xb) \leq \Delta$, $F \in \cS^{(L,L)} \subset \cS^{(\sigma,L)}$. Moreover, since $\nabla f_i(\xb) = -\lambda\sqrt{n}\eb^{(i)}/\beta + \lambda\xb/\beta^2$, then for some $\xb$, $i$ is in the support set of $\xb$ only if $f_i$ has been called. Thus, if less than $n/2$ IFO calls have been made, then current point $\xb$ satisfies that $|\text{supp}\{\xb\}| \leq n/2$. With \eqref{omega_n_1}, we have that
$F(\xb) - \inf_{\zb }F(\zb) \geq \Delta/4 \geq \epsilon $. 
\end{proof}
\subsection{Proof of Lemma \ref{omega_n_convex}}
\begin{proof}[Proof of Lemma \ref{omega_n_convex}]
Suppose the initial point $\xb^{(0)} = \zero$. Consider the following function $\{\bar f_i\}_{i=1}^n$, $\bar f_i:\RR^n \rightarrow \RR$, where
\begin{align}
    \bar f_i(\xb) &:= -\sqrt{n}\la\xb, \eb^{(i)}\ra + \frac{\|\xb\|_2^2}{2}, \notag \\
    \bar F(\xb) &:= \frac{1}{n}\sum_{i=1}^n \bar f_i(\xb), \notag
\end{align}
$\eb^{(i)}$ is the $i$-th coordinate vector. Then by the proof of Lemma \ref{omega_n}, we know that $\{\bar f_i\}_{i=1}^n\in \cV^{(1)}$, $\dist(\zero, \bar\xb^*) = 1$ where $\bar\xb^*$ is the global minimizer of $\bar F$ and for any $\xb$ satisfying $|\text{supp}\{\xb\}| \leq n/2$, 
\begin{align}
    \bar F(\xb) - \inf_{\xb} F(\xb) \geq 1/4.\label{omega_n_convex_2}
\end{align}
Next we choose $f_i = \lambda \bar f_i(\xb/\beta)$, where $\lambda = LB^2, \beta =B $, then we can check that $\{f_i\}_{i=1}^n \in \cV^{(L)}$, $\dist(\zero, \xb^*) = B$ where $\xb^*$ is the global minimizer of $F$, $F \in \cS^{(L,L)} \subset \cS^{(0,L)}$. Moreover, since $\nabla f_i(\xb) = -\lambda\sqrt{n}\eb^{(i)}/\beta + \lambda\xb/\beta^2$, then for some $\xb$, $i$ is in the support set of $\xb$ only if $f_i$ has been called. Thus, if less than $n/2$ IFO calls have been made, then current point $\xb$ satisfies that $|\text{supp}\{\xb\}| \leq n/2$. With \eqref{omega_n_1}, we have that
$F(\xb) - \inf_{\zb}F(\zb) \geq \lambda/4 \geq \epsilon $. 
\end{proof}

\bibliographystyle{ims}
\bibliography{reference}

\end{document}